\numberwithin{equation}{section}
\newtheorem{theorem}{Theorem}[section]
\newtheorem{lemma}[theorem]{Lemma}
\newtheorem{thm}[theorem]{Theorem}
\newtheorem{defn}[theorem]{Definition}
\newtheorem{rmk}[theorem]{Remark}
\newtheorem{pro}[theorem]{Proposition}
\newcommand{\Rmnum}[1]{\expandafter\@slowromancap\romannumeral #1@}
\begin{document}

\title{Quantitative Estimates on Reiterated Homogenization of Linear Elliptic operators Using Fourier Transform Methods}
\author{Yiping Zhang\footnote{Email:zhangyiping161@mails.ucas.ac.cn}\\Academy of Mathematics and Systems Science, CAS;\\
University of Chinese Academy of Sciences;\\
Beijing 100190, P.R. China.}
\date{}
\maketitle
\begin{abstract}
In this paper, we are interested in the reiterated homogenization of linear elliptic equations of the form $-\frac{\partial}{\partial x_{i}}\left(a_{i j}\left(\frac{x}{\varepsilon}, \frac{x}{\varepsilon^{2}}\right) \frac{\partial u_\varepsilon}{\partial x_{j}}\right)=f$ in $\Omega$ with Dirichlet boundary conditions. We obtain error estimates $O(\varepsilon)$ for a bounded  $C^{1,1}$ domain for this equation as well as the  interior
Lipschitz estimates at (very) large scale. Compared to the general homogenization problems, the difficulty in the reiterated homogenization is that we need to handle different scales of $x$. To overcome this difficulty, we firstly introduce the Fourier transform in the homogenization theory to separate these different scales. We also note that this method may be adapted to the following reiterated homogenization problem: $-\frac{\partial}{\partial x_{i}}\left(a_{i j}\left(\frac{x}{\varepsilon},\cdots, \frac{x}{\varepsilon^{N}}\right) \frac{\partial u_\varepsilon}{\partial x_{j}}\right)=f$ in $\Omega$ with Dirichlet boundary conditions. Moreover, our results may be extended to the related Neumann boundary problems without any real difficulty.
\end{abstract}

\section{Introduction and main results}
Before we state the introduction and the main results, we introduce the Einstein summation convention first.
Throughout this paper, we use the Einstein summation convention: An index occurring twice in a product is to be summed from 1 up to the space dimension, which means, for example,
\begin{equation*}
u_i v_i=\sum_{i=1}^{n}u_i v_i,
\end{equation*} if the space dimension is $n$.

The aim of the present paper is to study the error estimates and the interior Lipschitz estimates at large scale for linear elliptic equations, arising from
the reiterated homogenization problem. More precisely, let $\Omega\subset\mathbb{R}^n$ be a bounded domain with $n\geq 2$, and consider the following reiterated homogenization
problem in divergence form depending on a parameter $\varepsilon>0$,

\begin{equation}\label{1.1}
\left\{
\begin{aligned}
\mathcal{L}_{\varepsilon} u_{\varepsilon} \equiv-\frac{\partial}{\partial x_{i}}\left(a_{i j}\left(\frac{x}{\varepsilon}, \frac{x}{\varepsilon^{2}}\right) \frac{\partial u_\varepsilon}{\partial x_{j}}\right)&=f  \text { in } \Omega \\
u_{\varepsilon}&=g  \text { on } \partial \Omega.
\end{aligned}\right.
\end{equation}

Given three constants $\alpha$, $\beta$ and $M$ such that the function $a_{i j}(y,z) \in L^{\infty}\left(\mathbb{R}^{n} \times \mathbb{R}^{n}\right)$ satisfies the following conditions.\\
$\bullet $ The uniformly elliptic condition.
\begin{equation}\label{1.2}
\beta|\xi|^2\geq{a_{i j}(y, z) \xi_{i} \xi_{j} \geq \alpha |\xi|^2,\text{ for some } \beta\geq\alpha>0, \text { a.e. in } y, z.}
\end{equation}
$\bullet $ The smoothness condition. There exist a constant $M>0$, such that for any $y_1,y_2,z\in \mathbb{R}^n$, there holds
\begin{equation}\label{1.3}| a_{ij}(y_1,z)-a_{ij}(y_2,z)|\leq M|y_1-y_2|.\end{equation}
$\bullet $ The periodicity condition.
\begin{equation}\label{1.4}a_{i j}(y,z) \text { is } Y-Z \text { periodic.}\end{equation}

For simplicity, we may assume $Y=Z=(0,1)^n$. Denote $A=(a_{ij})$ is a $n\times n$ matrix.
The following homogenization results are well known ( See \cite[Chapter 1.8]{Louis1978Asymptotic}, for example).  Let $f\in H^{-1}(\Omega)$ and $u_\varepsilon$ be the weak solution to (\ref{1.1}). Then
$u_\varepsilon\rightharpoonup u_0$ weakly in $H^1(\Omega)$, and $A(x/\varepsilon,x/\varepsilon^2)\nabla u_\varepsilon\rightharpoonup\widehat{A}\nabla u_{0}$ weakly in $L^2(\Omega;\textrm{R}^n)$, where $u_0$ is the unique solution of
\begin{equation}\label{1.5}\left\{
\begin{aligned}
\mathcal{L}_{0} u_0 \equiv-\operatorname{div}\left(\widehat{A}\nabla u_0\right) &=f  \text { in } \Omega \\
u_{0}&=g  \text { on } \partial \Omega.
\end{aligned}\right.
\end{equation}
The operator $\widehat{A}=(\widehat{a}_{ij}):\Omega\times\mathbb{R}^n\mapsto\mathbb{R}^n$ is a constant matrix defined as
\begin{equation}\label{1.6}
\hat{a}_{i j}=\frac{1}{|Y||Z| }\iint_{Y \times Z}\left[a_{i j}-a_{i k} \frac{\partial \chi_{y}^{j}}{\partial z_{k}}-a_{i k} \frac{\partial \chi^{j}}{\partial y_{k}}+a_{i k} \frac{\partial \chi_{y}^{l}}{\partial z_{k}} \frac{\partial \chi^{j}}{\partial y_{l}}\right] d y d z,
\end{equation}
where $\chi_{y}^{k}(z)=\chi^{k}(y,z)$ is the unique solution of the cell-problem
\begin{equation}\label{1.7}\left\{
\begin{aligned}
-\frac{\partial}{\partial z_{i}}\left(a_{i j}\left(y, z\right) \frac{\partial}{\partial z_{j}}\left(\chi_{y}^{k}(z)-z_k\right)\right)=0  \text { in } Z \\
\chi_{y}^{k}(z)\  is\  Z-\text{periodic},\ \fint_Z\chi_{y}^{j}(z)dz=0,
\end{aligned}\right.
\end{equation}
for $k=1,2,\cdots,n$, and $\chi^k(z)$ is the unique solution of the cell problem
\begin{equation}\label{1.8}\left\{
\begin{aligned}
-\frac{\partial}{\partial y_{i}}\left[\left(\fint_Z\left(a_{i j}(y,z)-a_{i k}(y,z) \frac{\partial \chi_{y}^{j}(z)}{\partial z_{k}}\right)dz\right)\frac{\partial}{\partial y_j}(\chi^k(y)-y_k)\right]=0  \text { in } Y \\
\chi^{k}(y)\  is \ Y-\text{periodic},\ \fint_Y\chi^{k}(y)dy=0,
\end{aligned}\right.
\end{equation} for $k=1,2,\cdots,n$. Note the coefficient $\widehat{a}_{ij}$ and $b_{ij}(y)=\fint_Z\left(a_{i j}(y,z)-a_{i k}(y,z) \partial_{z_k} \chi_{y}^{j}(z)\right)dz$
satisfies the uniformly elliptic condition. \\
\begin{rmk}Due to $a_{ij}(y,z)$ is Y-Z periodic, then the solution $\chi^k_y(z)$ of the equation $(\ref{1.7})$ is also Y-Z periodic.
Actually, that $\chi^k_y(z)$ is periodic with respect to $y$ is an important observation, which is omitted in \cite[Chapter 1.8.5]{Louis1978Asymptotic}, and plays an essential role in Proposition 3.2.\end{rmk}
Throughout this paper, we use the following notation
$$H^m_{\text{per}(Y)}=:\left\{f\in H^m(Y) \text{ and }f\text{ is Y-periodic with }\fint_Yfdy=0\right\}. $$

The following theorem is the main result of the paper, which establish the $O(\varepsilon)$ convergence rates in $L^2(\Omega)$ for the Dirichlet problems.
\begin{thm} (convergence rates). Let $\Omega\subset \mathbb{R}^n$ be a bounded $C^{1,1}$ domain, and assume that $\mathcal{L}_\varepsilon$ satisfies the
conditions $(\ref{1.2}),\ (\ref{1.3})$ and $(\ref{1.4})$. If $f\in L^2(\Omega),$ and $g\in H^{3/2}(\partial \Omega)$, let $u_\varepsilon,u_0\in
H^1(\Omega)$ be the weak solutions of $(\ref{1.1})$ and $(\ref{1.5})$,  respectively, then there holds the following estimates \begin{equation*}
||u_\varepsilon-u_0||_{L^2(\Omega)}\leq C\varepsilon(||g||_{H^{3/2}(\partial \Omega)}+||f||_{L^2(\Omega)}),
\end{equation*} where $C$ depends on $\alpha,\beta,M,n$ and $\Omega$.\end{thm}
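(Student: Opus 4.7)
The plan is to construct a second-order two-scale expansion using both correctors and estimate the resulting remainder in $H^1_0(\Omega)$ via energy methods, then upgrade to $L^2$ by duality. Concretely, introduce a Steklov-type smoothing $K_\varepsilon$ and a cut-off $\eta_\varepsilon$ that vanishes in an $\varepsilon$-neighborhood of $\partial\Omega$, and set
\begin{equation*}
v_\varepsilon(x) = u_0(x) + \varepsilon\,\chi^k\!\left(\tfrac{x}{\varepsilon}\right)\eta_\varepsilon K_\varepsilon(\partial_k u_0)(x) + \varepsilon^{2}\,\chi^{k}_{x/\varepsilon}\!\left(\tfrac{x}{\varepsilon^{2}}\right)\eta_\varepsilon K_\varepsilon(\partial_k u_0)(x),
\end{equation*}
so that $w_\varepsilon := u_\varepsilon - v_\varepsilon \in H^{1}_{0}(\Omega)$ (the boundary values of $u_\varepsilon$ and $u_0$ already agree on $\partial\Omega$). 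The smoothing and the cut-off are standard devices that allow $u_0\in H^{2}(\Omega)$ (available on $C^{1,1}$ domains by elliptic regularity, with $\|u_0\|_{H^{2}}\le C(\|f\|_{L^{2}}+\|g\|_{H^{3/2}})$) to be used in place of a higher-regularity $u_0$.

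Next, apply $\mathcal{L}_\varepsilon$ to $v_\varepsilon$ and match against $\mathcal{L}_\varepsilon u_\varepsilon = f = \mathcal{L}_0 u_0$. Differentiation produces contributions of formal order $\varepsilon^{-2},\varepsilon^{-1},\varepsilon^{0},\varepsilon$. The $\varepsilon^{-2}$ term disappears by the inner cell problem (1.7); averaging the resulting $z$-periodic, mean-zero expression and invoking the outer cell problem (1.8) cancels the $\varepsilon^{-1}$ and $\varepsilon^{0}$ contributions in an averaged sense. The remaining oscillatory fluxes are then rewritten as divergences of skew-symmetric flux correctors $\phi^{k}_{ij}(y)$ and $\widetilde{\phi}^{k}_{ij}(y,z)$ associated to (1.8) and (1.7) respectively; together with boundary-layer terms supported in $\{\operatorname{dist}(x,\partial\Omega)<\varepsilon\}$, this gives $\mathcal{L}_\varepsilon w_\varepsilon = \operatorname{div}(F_\varepsilon) + R_\varepsilon$ with $\|F_\varepsilon\|_{L^{2}}+\|R_\varepsilon\|_{H^{-1}}\le C\varepsilon(\|f\|_{L^{2}}+\|g\|_{H^{3/2}})$.

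The main obstacle is controlling the cross-scale term $\varepsilon\,\partial_{y_j}\chi^{k}_{y}(z)\big|_{y=x/\varepsilon,\,z=x/\varepsilon^{2}}$, whose two arguments live on genuinely different scales. This is exactly where the paper's Fourier method enters, and where I expect the bulk of the work to lie. Exploiting the $Y$-periodicity of $\chi^{k}_{y}(z)$ in $y$ flagged in the Remark, I would expand
\begin{equation*}
\chi^{k}_{y}(z)=\sum_{\xi\in\mathbb{Z}^{n}}\widehat{\chi^{k}}_{\xi}(z)\,e^{2\pi i\xi\cdot y},
\end{equation*}
so that each Fourier mode factorizes into a purely $z$-periodic coefficient and an $\varepsilon$-scale oscillation $e^{2\pi i\xi\cdot x/\varepsilon}$. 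This decouples the two scales and reduces the remainder estimates to the single-scale case mode-by-mode; summability of the series is secured by the smoothness (1.3) and standard Schauder estimates for (1.7), which give decay of $\widehat{\chi^{k}}_{\xi}$ in $|\xi|$.

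Once $\|w_\varepsilon\|_{H^{1}_{0}(\Omega)}\le C\sqrt{\varepsilon}\,(\|f\|_{L^{2}}+\|g\|_{H^{3/2}})$ is established from the divergence-form remainder together with the boundary trace bound $\|\cdot\|_{L^{2}(O_\varepsilon)}\le C\sqrt{\varepsilon}\,\|\cdot\|_{H^{1}}$, the final $L^{2}(\Omega)$ estimate at rate $\varepsilon$ is obtained by an Aubin--Nitsche duality argument: for an arbitrary $\Phi\in L^{2}(\Omega)$ I solve $\mathcal{L}_{0}^{*}\psi=\Phi$ with zero Dirichlet data, so that $\psi\in H^{2}(\Omega)$, construct the analogous two-scale expansion $v_\varepsilon^{*}$ for $\psi$, and test $\int_{\Omega}(u_\varepsilon-u_0)\Phi$ by integration by parts against $v_\varepsilon^{*}$; the two $\sqrt{\varepsilon}$ factors from the primal and dual remainders combine to give the claimed $O(\varepsilon)$ bound. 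The hardest step throughout is verifying that the Fourier-mode decomposition produces estimates with constants summable in $\xi$ uniformly in $\varepsilon$.
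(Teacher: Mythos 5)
Your overall strategy—smoothed, cut-off two-scale expansion, an energy estimate giving $O(\varepsilon^{1/2})$ in $H^1$, Fourier separation of the two scales, and a duality step—is the same as the paper's (Lemma 3.1, Proposition 3.2, Lemma 3.4, Theorem 3.5 and Section 4). But your ansatz is incomplete: the inner corrector must act on the first-order-corrected gradient, i.e. the paper's expansion $(\ref{3.2})$ carries the factor $\varepsilon^{2}\chi^{j}_{y}(x/\varepsilon^{2})\bigl[\psi_{2\varepsilon}S_\varepsilon(\partial_{j}u_0)-\partial_{y_j}\chi^{k}(x/\varepsilon)\psi_{2\varepsilon}S_\varepsilon(\partial_{k}u_0)\bigr]$, whereas your $v_\varepsilon$ contains only $\varepsilon^{2}\chi^{k}_{x/\varepsilon}(x/\varepsilon^{2})\eta_\varepsilon K_\varepsilon(\partial_k u_0)$. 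With your choice the order-one part of the flux is $\bigl[\widehat{a}_{ij}-a_{ij}+a_{ih}\partial_{y_h}\chi^{j}+a_{ih}\partial_{z_h}\chi^{j}_{y}\bigr]K_\varepsilon(\partial_j u_0)$, and by $(\ref{1.6})$ the mean of this bracket over $Y\times Z$ equals $\fint\!\!\fint_{Y\times Z} a_{ih}\partial_{z_h}\chi^{l}_{y}\,\partial_{y_l}\chi^{j}\,dy\,dz$, which is nonzero in general. A bracket with nonzero mean cannot be rewritten as $\varepsilon$ times divergences of bounded flux correctors, so the cancellation of the $\varepsilon^{0}$ contribution you invoke fails and the remainder is $O(1)$, not $O(\varepsilon)$. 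The omitted cross term is exactly what produces $H_{2,i}=(I_{1,ij}+I_{2,ij}+I_{3,ij})\psi_{2\varepsilon}S_\varepsilon(\partial_j u_0)$ in the paper, with each $I_m$ mean-zero and divergence-free in the appropriate variable, which is the structural fact the whole argument rests on.

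The second gap is the step you yourself flag as the hardest. You expand $\chi^{k}_{y}(z)$ in a Fourier series in $y$ and claim summability in $\xi$ from ``smoothness (1.3) and standard Schauder estimates for (1.7).'' Under $(\ref{1.2})$--$(\ref{1.4})$ the coefficients are merely $L^\infty$ in $z$ and Lipschitz in $y$, so no Schauder theory is available; Lemma 2.1 gives only $\fint_Z|\nabla_y\chi^k_y|^2\,dz\le C$, i.e. $\sum_\xi|\xi|^2\|\widehat{\chi^k}_\xi\|_{L^2(Z)}^2\le C$, which does not provide the absolute, mode-by-mode summability your reduction needs when $n\ge2$. The paper does not rely on decay of $\chi$ in the expansion variable: it Fourier-expands the divergence-free quantities $I_{1},I_{3}$ (and $\chi$ itself only where needed) in the variable complementary to the one carrying the smoothing operator—consistently with Lemma 2.7 and Remark 2.8, since $S_\varepsilon$ can absorb periodic factors at the finer scale $\varepsilon^{2}$ but not conversely—and the decaying weights $k_i|k|^{-2}$ arise from inverting $\Delta_z$ in the construction of the flux corrector $E_{3}$, with Plancherel controlling the coefficient functions (Proposition 3.2, $(\ref{3.11})$--$(\ref{3.17})$). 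Your final duality step coincides with the paper's Section 4 and would go through once the ansatz and the Fourier estimates are repaired, but as written the proposal closes neither of these two gaps.
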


The convergence rate is one of the central issues in homogenization theory and has been studied extensively in the various setting. For elliptic equations and systems in divergence form with periodic coefficients, related results may be found in the recent work \cite{shen2018periodic,Kenig2010Homogenization,shen2017boundary,kenig2012convergence}.

As for the reiterated homogenization problems, very few results are known. In \cite[Chapter 1.8.5]{Louis1978Asymptotic}, the author has obtained
$$u_\varepsilon-u_0-\theta_\varepsilon\rightarrow 0\text{ strongly in }H^1_0(\Omega)\text{ as }\varepsilon\rightarrow 0,$$
if
\begin{equation}\label{1.9}
\begin{array}{l}{u_0 \in H^{2}(\Omega)} \\ {\chi^{j} \in W^{2, \infty}(Y)} \\ {\chi_{y}^{j}(z)=\chi^{j}(y, z) \in W^{1, \infty}(Y \times Z)},\end{array}
\end{equation}
where \begin{equation*}
\theta_{\varepsilon}=-\varepsilon \chi^{j}\left(\frac{x}{\varepsilon}\right) \frac{\partial u}{\partial x_{j}}(x) m_{\varepsilon}(x)-\varepsilon^{2} \chi_{y}^{j}\left(\frac{x}{\varepsilon^{2}}\right)\left[\frac{\partial u}{\partial x_{j}}-\left(\frac{\partial \chi^{k}}{\partial y_{j}}\right)\left(\frac{x}{\varepsilon}\right) \frac{\partial u}{\partial x_{k}}\right] m_{\varepsilon}
\end{equation*} for a suitable cut-off function $m_\varepsilon$. And in \cite{zhao2013convergence}, the convergence rate
$$||u_\varepsilon-u_0||_{L^\infty(\Omega)}\leq C\varepsilon$$
is obtained by a method based on the representation of elliptic equation solution by Green function, under the assumption
$$A(y,z)\in C^{1,\gamma}(Y\times Z) \text{ for some } \gamma>0,$$  which is a sufficient condition to ensure $(\ref{1.9})$. \\

Recently, the authors in \cite{Niuweisheng} have studied the reiterated homogenization problem of the form $-\operatorname{div}(A(x,x/\varepsilon_1,\cdots,x/\varepsilon_n)\nabla u_\varepsilon)=f$ with similar smooth assumptions on $A$ compared to this paper. To handle the different scales, the authors introduce the following $\varepsilon$-smoothing operator:
$$S_\varepsilon(g^\varepsilon)(x)=\int_{\mathbb{R^n}}g(z,x/\varepsilon)\rho_\varepsilon(x-z)dz,$$
where $\rho$ is a standard modifier.\\

Compared to \cite{Niuweisheng} (actually, this paper is a special case of \cite{Niuweisheng}), we firstly introduce the Fourier transform methods into homogenization problems to separate these different scales and obtain the $O(\varepsilon)$ error estimates. However, we must point out that the Fourier transform methods could't apply to the problem $-\operatorname{div}(A(x,x/\varepsilon_1,\cdots,x/\varepsilon_n)\nabla u_\varepsilon)=f$ unless $A$ is periodic with respect to the first variable.  For more details, see Proposition 3.2.\\

After obtaining the convergence rate Theorem 3.5, we may derive the following interior Lipschitz estimates at (very) large scale.
\begin{thm}(interior Lipschitz estimates at large scale) Assume that $A(y,z)$ satisfies the conditions $(\ref{1.2})$, $(\ref{1.3})$ and $(\ref{1.4})$.
 For some $x_0\in\mathbb{R}^n$, let $u_\varepsilon\in H^1(B(x_0,1))$ be a weak solution of $-\frac{\partial}{\partial x_{i}}\left(a_{i
 j}\left(\frac{x}{\varepsilon}, \frac{x}{\varepsilon^{2}}\right) \frac{\partial u_\varepsilon}{\partial x_{j}}\right)=f$ in $B(x_0,1)$, where $f\in
 L^p(B(x_0,1))$ for some $p>n$.Then for any $0<\varepsilon <1$, we have
\begin{equation}\label{1.10}
\left(\fint_{B\left(z_{0}, \varepsilon\right)}\left|\nabla u_{\varepsilon}\right|^{2}\right)^{1 / 2} \leq C\left\{\left(\fint_{B\left(z_{0}, 1\right)}\left|\nabla u_{\varepsilon}\right|^{2}\right)^{1 / 2}+\left(\fint_{B\left(z_{0}, 1\right)}|f|^{p}\right)^{1 / p}\right\},
\end{equation} where $C$ depends only on $\alpha,\beta,M$ and $n$.
\end{thm}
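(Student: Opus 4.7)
The plan is to follow the Avellaneda--Lin / Armstrong--Smart / Shen compactness scheme: combine the quantitative $O(\varepsilon)$ convergence rate supplied by Theorem 1.1 with classical $C^{1,\alpha}$ (Schauder/Morrey) interior estimates for the constant-coefficient homogenized operator $\mathcal{L}_0 = -\operatorname{div}(\widehat{A}\nabla\cdot)$, and iterate a one-step excess-decay between the scale $1$ and the scale $\varepsilon$.

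First I would rescale. For $\varepsilon\le r\le 1$, set $\tilde{u}(y)=r^{-1}u_{\varepsilon}(z_0+ry)$; then $\tilde u$ solves an elliptic equation of the same form on $B(0,1)$ with effective small parameter $\varepsilon/r\le1$, the coefficient matrix is still $Y$--$Z$ periodic (after a trivial relabelling), and the right-hand side is $r\,f(z_0+r\cdot)$. Because $\widehat{A}$ does not depend on the rescaling, the homogenized operator is identical after rescaling, which is what makes the iteration viable.

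Next I would carry out the one-step improvement. Let $w$ solve $\mathcal{L}_0 w=r\,f(z_0+r\cdot)$ on $B(0,1)$ with $w=\tilde u$ on $\partial B(0,1)$. Theorem 1.1 gives
\[
\|\tilde u-w\|_{L^2(B(0,1))}\le C\,\frac{\varepsilon}{r}\Bigl(\|\tilde u\|_{H^{1/2}(\partial B(0,1))}+r\,\|f(z_0+r\cdot)\|_{L^2(B(0,1))}\Bigr).
\]
Since $\widehat{A}$ is constant, $w$ enjoys classical interior estimates, so for every $\theta\in(0,1/4)$,
\[
\inf_{q\in\mathbb{R}^n}\fint_{B(0,\theta)}|\nabla w-q|^2\le C\theta^{2\alpha}\fint_{B(0,1)}|\nabla w|^2 + C\,r^{2-n/p}\|f\|_{L^p(B(z_0,r))}^{2},
\]
where the Morrey embedding $p>n$ is used on the forcing term. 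Introduce the Campanato excess
\[
H(r)=\inf_{P\text{ affine}}\frac{1}{r}\Bigl(\fint_{B(z_0,r)}|u_\varepsilon-P|^2\Bigr)^{1/2}.
\]
Transferring the $C^{1,\alpha}$ decay for $w$ to $u_\varepsilon$ via the closeness estimate and a Caccioppoli inequality yields, for $\varepsilon\le r\le 1$,
\[
H(\theta r)\le\tfrac{1}{2}H(r)+C\Bigl(\frac{\varepsilon}{r}\Bigr)^{\!\sigma}\Psi(r)+C r^{1-n/p}\|f\|_{L^p(B(z_0,1))},
\]
for some $\sigma>0$ and a controlled quantity $\Psi(r)\le C(H(r)+\text{data})$, once $\theta$ is first fixed small enough to beat the Schauder exponent $\alpha$.

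Finally I would iterate this inequality at $r_k=\theta^k$, $k=0,1,\dots,K$ with $\theta^K\asymp\varepsilon$, sum the resulting geometric series, and conclude a uniform bound on $H(r)$ for $r\in[\varepsilon,1]$. Reading off this bound at $r=\varepsilon$ and combining with a Caccioppoli inequality at scale $\varepsilon$ (to pass from $u_\varepsilon-P$ to $\nabla u_\varepsilon$) produces the desired estimate~(\ref{1.10}); the constant absorbs the data through $\|u_\varepsilon\|_{H^1(B(z_0,1))}$ and $\|f\|_{L^p(B(z_0,1))}$. The main obstacle, as always in this scheme, is the calibration of the one-step improvement: $\theta$ must be chosen small enough so that the classical $\theta^{\alpha}$ gain strictly dominates the $1/2$ factor, and only afterwards can we restrict to $r\ge\varepsilon$ so that $\varepsilon/r\le1$ and the homogenization error is absorbable; extra care is needed to keep $\|f\|_{L^p}$ (rather than $\|f\|_{L^2}$) appearing correctly through every iteration, which is precisely where $p>n$ together with Morrey's embedding is used.
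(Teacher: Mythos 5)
Your overall scheme (approximate $u_\varepsilon$ by a solution of the homogenized constant-coefficient equation, run a $C^{1,\alpha}$ excess-decay for that solution, and iterate down to scale $\varepsilon$) is the same as the paper's, but two steps as you wrote them do not go through. First, the closeness estimate you invoke is not available: Theorem 1.1 (and its localized version, Theorem 3.5) controls $\|u_\varepsilon-w\|_{L^2}$ by the $H^{3/2}$ norm of the common boundary datum, whereas the datum here is the trace of $u_\varepsilon$ on an interior sphere, which Caccioppoli plus the co-area formula only place in $H^{1}(\partial B_{r_0})$; your displayed inequality with only $\|\tilde u\|_{H^{1/2}(\partial B(0,1))}$ on the right is not what the convergence-rate theorem gives. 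The paper's Lemma 5.1 exists precisely to bridge this gap: it smooths the boundary datum at a scale $\delta$ (producing $(u_\varepsilon)_\delta$ with the bounds (\ref{5.3})), corrects by a harmonic function estimated via nontangential maximal function bounds, applies Theorem 3.5 to the smoothed datum, and optimizes $\delta=(\varepsilon r)^{1/2}$, which yields only the rate $(\varepsilon/r)^{1/4}$ — still Dini-summable, hence enough for the iteration, but strictly weaker than the $\varepsilon/r$ you assumed.

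Second, your rescaling step is not a ``trivial relabelling.'' Under $x=z_0+ry$ the coefficient $a_{ij}(x/\varepsilon,x/\varepsilon^2)$ becomes a two-scale coefficient with scales $\varepsilon/r$ and $\varepsilon^2/r$, and $\varepsilon^2/r\neq(\varepsilon/r)^2$ unless $r=1$; so the rescaled equation is no longer of the reiterated form $a(\cdot/\epsilon',\cdot/\epsilon'^2)$ covered by Theorem 1.1, and you cannot quote that theorem for $\tilde u$ with effective parameter $\varepsilon/r$. The paper avoids this by never rescaling the operator: Theorem 3.5 is proved directly on balls $B_r$, $r\in(20\varepsilon,1]$, tracking the $r$-dependence of all constants, and Lemma 5.1, Lemma 5.3 and the iteration Lemma 5.4 (which also requires the slope-tracking bound $|\psi(t)-\psi(s)|\le C\Psi(2r)$ that you fold into an unspecified ``controlled quantity'') are then run at the original scales. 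With these two repairs — replacing your one-step closeness claim by the paper's Lemma 5.1 (accepting the exponent $1/4$) and replacing the rescaling by an $r$-dependent convergence-rate statement — your argument matches the paper's proof.
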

The Lipschitz estimate has been studied extensively in the various settings. For elliptic equations and systems in divergence form with periodic coefficients or almost periodic coefficients, related results may be found in the recent work \cite{shen2017boundary,shen2018periodic,armstrong2016lipschitz1}.\\

At this position, we give two remarks.
\begin{rmk}Similar to the proof of the interior Lipschitz estimates at large scale, we could obtain the boundary H\"{o}lder estimates at large scale under suitable boundary condition which we omit here (for more details, see \cite[Chapter 5.2]{shen2018periodic}).\end{rmk}
\begin{rmk}The scale of the interior Lipschitz estimates $(\ref{1.10})$ is too large for the variable $z$ to obtain the interior $W^{1,p}$ estimates for $u_\varepsilon$. Actually, we try to obtain the following estimates
\begin{equation*}
\left(\fint_{B\left(x_{0}, \varepsilon^2\right)}\left|\nabla u_{\varepsilon}\right|^{2}\right)^{1 / 2} \leq C\left\{\left(\fint_{B\left(x_{0}, 1\right)}\left|\nabla u_{\varepsilon}\right|^{2}\right)^{1 / 2}+\left(\fint_{B\left(x_{0}, 1\right)}|f|^{p}\right)^{1 / p}\right\},
\end{equation*} which is useful for obtaining the interior $W^{1,p}$ estimates, but we failed. For the reason, see Remark 3.3 and Remark 5.5.\end{rmk}

\section{Preliminaries}
\begin{lemma}
Let $\chi^k(y)$ and $\chi^k_y(z)$ be the weak solution of $(\ref{1.8})$ and $(\ref{1.7})$, respectively. Then there hold
\begin{equation}\label{2.1}\fint_Z\left|\nabla_y\chi^k_y(z)\right|^2dz+\fint_Z\left|\nabla_z\nabla_y\chi^k_y(z)\right|^2dz\leq C\end{equation} and
\begin{equation}\label{2.2}||\chi^k(y)||_{W^{2,p}(Y)}+||\chi^k_y(z)||_{W^{1,2}(Z)}\leq C\end{equation} for any $p\in(1,\infty)$ and $k=1,2,\cdots,n$, where $C$ depends on $\alpha,\beta,p,M$ and $n$.
\end{lemma}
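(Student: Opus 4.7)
The plan is to prove the two estimates in four steps, moving from the microscopic cell problem (\ref{1.7}) outward to (\ref{1.8}).

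First, I obtain $\|\chi^k_y\|_{W^{1,2}(Z)}\leq C$ from the standard energy estimate for (\ref{1.7}): testing against $\chi^k_y$ itself and using the uniform ellipticity (\ref{1.2}) together with the zero-mean Poincar\'e inequality on $Z$ give a bound uniform in $y$. Second, for (\ref{2.1}), I differentiate (\ref{1.7}) in $y_l$ (rigorously via difference quotients). The function $\partial_{y_l}\chi^k_y$ is $Z$-periodic with zero mean (since $\fint_Z \chi^k_y\,dz=0$ for every $y$) and solves
\begin{equation*}
-\partial_{z_i}\!\bigl(a_{ij}(y,z)\,\partial_{z_j}\partial_{y_l}\chi^k_y\bigr)
= \partial_{z_i}\!\bigl((\partial_{y_l}a_{ij})(y,z)\,\partial_{z_j}(\chi^k_y - z_k)\bigr)\quad\text{in }Z,
\end{equation*}
whose right-hand side lies in $H^{-1}(Z)$ with norm bounded by $C(1+\|\nabla_z\chi^k_y\|_{L^2(Z)})\leq C$ by (\ref{1.3}) and the first step. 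The energy estimate then delivers (\ref{2.1}).

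Third, using (\ref{2.1}) I show that $b_{ij}(y)=\fint_Z(a_{ij}-a_{ik}\partial_{z_k}\chi^j_y)\,dz$ lies in $W^{1,\infty}(Y)$: differentiating $b_{ij}$ in $y_l$ produces three terms controlled pointwise by (\ref{1.3}), (\ref{2.1}) and the Cauchy--Schwarz inequality. Fourth, since $b_{ij}$ is both uniformly elliptic (as noted after (\ref{1.8})) and Lipschitz, I apply elliptic regularity on the torus to (\ref{1.8}): the $L^2$ energy estimate yields $\nabla_y\chi^k\in L^2(Y)$; a $W^{1,p}$ estimate for divergence-form equations with the $L^\infty$ source $-b_{ik}$ upgrades this to $W^{1,p}$ for any $p<\infty$; and finally, rewriting (\ref{1.8}) in non-divergence form $-b_{ij}\partial_{y_i}\partial_{y_j}\chi^k = \partial_{y_i} b_{ik} - (\partial_{y_i} b_{ij})\partial_{y_j}\chi^k$ produces an $L^p$ right-hand side, from which Calder\'on--Zygmund $W^{2,p}$ regularity gives (\ref{2.2}).

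The main obstacle is Step 2: rigorously differentiating the inner cell problem in the slow parameter $y$ and verifying that the source $(\partial_{y_l}a_{ij})\partial_{z_j}(\chi^k_y - z_k)$ is in $L^2(Z)$ uniformly in $y$. Once this is established, everything downstream reduces to standard linear elliptic regularity for periodic problems.
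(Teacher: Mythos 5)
Your proposal is correct and follows essentially the same route as the paper: energy estimate for (\ref{1.7}), then control of the $y$-dependence of $\chi^k_y$ (you via difference quotients of the cell problem, the paper via the equation for $\chi^k(y_1,\cdot)-\chi^k(y_2,\cdot)$ together with the Lipschitz bound (\ref{1.3}), which is the same estimate), then $|\nabla_y b_{ij}|\leq C$, and finally $W^{2,p}$ regularity for (\ref{1.8}) rewritten in non-divergence form with Lipschitz coefficients. Your extra intermediate $W^{1,p}$ bootstrap is a harmless refinement of the paper's direct appeal to non-divergence-form Calder\'on--Zygmund theory.
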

\begin{proof}The proof is standard. Firstly, testing the equation $(\ref{1.7})$ with $\chi_y^k(z)$ gives that
\begin{equation}\label{2.3}||\chi^k_y(z)||_{W^{1,2}(Z)}\leq C.\end{equation} Then for any $y_1,y_2$, there holds
\begin{equation}\label{2.4}
-\partial_{z_i}\left((a_{ij}(y_1,z)-a_{ij}(y_2,z))\partial_{z_j}(\chi^k(y_1,z)-z_k)\right)=
\partial_{z_i}\left(a_{ij}(y_2,z)\partial_{z_j}(\chi^k(y_1,z)-\chi^k(y_2,z))\right),
\end{equation} due to $\chi^k(y_1,z)$ and $\chi^k(y_2,z)$ satisfying the equation $(\ref{1.8})$, respectively. Testing the equation $(\ref{2.4})$ with $\chi^k(y_1,z)-\chi^k(y_2,z)$, then it gives that
\begin{equation}\label{2.5}\begin{aligned}
\fint_Z|\nabla_z(\chi^k(y_1,z)-\chi^k(y_2,z))|^2dz&\leq C\fint_Z|a_{ij}(y_1,z)-a_{ij}(y_2,z)|^2(1+|\nabla_z(\chi^k(y_1,z)|^2)dz\\
&\leq C|y_1-y_2|^2
\end{aligned}\end{equation} due to $(\ref{1.3})$ and $(\ref{2.3})$, thus this together with Poinc\'{a}re inequality will give the state estimate $(\ref{2.1})$.

For the estimate $(\ref{2.2})$, we firstly note that $||\chi^k_y(z)||_{L^\infty(Z)}\leq C$ due to the De Giorgi-Nash-Moser theorem.
Then, we need only prove  $||\chi^k(y)||_{W^{2,p}(Y)}\leq C$. Actually, the equation $(\ref{1.8})$ reads as
\begin{equation}\label{2.6}b_{ij}(y)\partial^2_{y_iy_j}\chi^k(y)+\partial_{y_i}b_{ij}(y)\partial_{y_j}\chi^k(y)=\partial_{y_i}b_{ik}(y),\end{equation}
where $$b_{ij}(y)=\fint_Z\left(a_{i j}(y,z)-a_{i k}(y,z) \partial_{z_k} \chi_{y}^{j}(z)\right)dz.$$
Note that
$$\begin{aligned}|\nabla_yb_{ij}(y)|&\leq\fint_Z\left(|\nabla_ya_{i j}(y,z)|+|\nabla_ya_{i k}(y,z)|| \nabla_{z} \chi_{y}^{j}(z)|+a_{i k}(y,z) |\nabla_y\nabla_{z} \chi_{y}^{j}(z)|\right)dz\\
&\leq C,\end{aligned}$$
where we have used $(\ref{1.3})$, $(\ref{2.1})$ and$(\ref{2.3})$. Consequently, according to the $W^{2,p}$ estimates of elliptic equations in non-divergence form and $\chi^k(y)$ is Y-periodic, therefore we complete the proof of $(\ref{2.2})$.
\end{proof}

\begin{lemma}(reverse H\"{o}lder inequality). Let $\chi^k_y(z)$ be the weak solution to $(\ref{1.7})$, then there exists a constant $\tau>0$ which depends on $\alpha,\beta$ and $n$, such that for any $y$, there holds
\begin{equation}\label{2.7}
||\nabla_z \chi^k_y(z)||_{L^{2+\tau}(Z)}\leq C,
\end{equation} where $C$ depends on $\alpha,\beta,M$ and $n$.
\end{lemma}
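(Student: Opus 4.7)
The plan is to prove this as a standard Meyers-type higher integrability estimate, with the only subtle point being that the constants must depend on $\alpha,\beta,M,n$ but be uniform in the parameter $y$. Fix $y\in\mathbb{R}^n$ and view $\chi^k_y(z)$ as a $Z$-periodic weak solution of
\begin{equation*}
-\partial_{z_i}\bigl(a_{ij}(y,z)\,\partial_{z_j}\chi^k_y(z)\bigr)=-\partial_{z_i}a_{ik}(y,z)
\end{equation*}
on $\mathbb{R}^n$. Since $\alpha$ and $\beta$ from $(\ref{1.2})$ do not depend on $y$, everything below yields bounds uniform in $y$.

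First I would derive a Caccioppoli inequality on balls: for any $B(z_0,2r)\subset\mathbb{R}^n$ and any constant $c\in\mathbb{R}$, testing the equation with $\eta^2(\chi^k_y-c)$ for a cut-off $\eta\in C_c^\infty(B(z_0,2r))$ with $\eta\equiv 1$ on $B(z_0,r)$ and $|\nabla\eta|\le C/r$ gives
\begin{equation*}
\fint_{B(z_0,r)}|\nabla_z\chi^k_y|^2\,dz\le \frac{C}{r^2}\fint_{B(z_0,2r)}|\chi^k_y-c|^2\,dz+C,
\end{equation*}
where the last $C$ absorbs $\|a_{ik}(y,\cdot)\|_{L^\infty}^2$. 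Choosing $c=\fint_{B(z_0,2r)}\chi^k_y\,dz$ and applying the Sobolev–Poincar\'e inequality with exponent $q=\max\{1,\tfrac{2n}{n+2}\}<2$ yields
\begin{equation*}
\left(\fint_{B(z_0,r)}|\nabla_z\chi^k_y|^2\,dz\right)^{1/2}\le C\left(\fint_{B(z_0,2r)}|\nabla_z\chi^k_y|^q\,dz\right)^{1/q}+C.
\end{equation*}

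Next I would invoke Gehring's lemma (the self-improving property of reverse H\"older inequalities): there exists $\tau>0$ depending only on $\alpha,\beta,n$ such that
\begin{equation*}
\left(\fint_{B(z_0,r)}|\nabla_z\chi^k_y|^{2+\tau}\,dz\right)^{1/(2+\tau)}\le C\left(\fint_{B(z_0,2r)}|\nabla_z\chi^k_y|^2\,dz\right)^{1/2}+C.
\end{equation*}
Finally I would cover the periodicity cell $Z$ by a finite family of balls of fixed radius (say $r=1/4$ after replacing $\chi^k_y$ by its periodic extension), sum the local estimates, and apply the global $L^2$ bound $\|\nabla_z\chi^k_y\|_{L^2(Z)}\le C$ from $(\ref{2.3})$ to arrive at $(\ref{2.7})$.

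The only delicate point is verifying that the constants in Caccioppoli, Sobolev–Poincar\'e, and Gehring are genuinely independent of $y$; but this is immediate since the Caccioppoli constant depends on $\alpha,\beta,n$ and $\|a_{ij}\|_{L^\infty}\le\beta$, Sobolev–Poincar\'e depends only on $n$, and Gehring's lemma depends only on the constants appearing in the reverse H\"older inequality we have just established. Thus there is no real obstacle; the argument is a routine adaptation of Meyers' classical $L^{2+\tau}$ estimate to the $y$-parameterized cell problem $(\ref{1.7})$.
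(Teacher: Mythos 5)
Your argument is the same as the paper's: a Caccioppoli estimate from testing with $\eta^2(\chi^k_y-c)$, Sobolev--Poincar\'e to get a reverse H\"older inequality, Gehring's self-improvement (the paper cites Giaquinta), and a covering argument using the global $L^2$ bound and $Z$-periodicity. Your additional remarks on why all constants are uniform in $y$ are correct and a useful explicit check, but the method is identical.
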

\begin{proof}For any $z_0\in Z$, choose a cut-off function $\eta_r\in C^1_0(B(z_0,2r))$ satisfying $\eta_r=1$ in $B(z_0,r)$ and $\eta_r=0$ outside $B(z_0,3r/2)$ with $|\nabla \eta_r|\leq 4/r$. Testing the equation $(\ref{1.7})$ with $\eta^2_r(\chi^k_y(z)-c)$ gives that
\begin{equation}\label{2.8}\int_{B(z_0,r)}|\nabla_z \chi^k_y(z)|^2dz\leq \frac{C}{r^2}\int_{B(z_0,2r)}| \chi^k_y(z)-c|^2dz+Cr^n,\end{equation}
then, choose $c=\fint_{B(z_0,2r)}\chi^k_y(z)dz$ and the Sobolev-Poinc\'{a}re inequality leads to
\begin{equation}\label{2.9}\fint_{B(z_0,r)}|\nabla_z \chi^k_y(z)|^2dz\leq C\left(\fint_{B(z_0,2r)}|\nabla_z \chi^k_y(z)|^{\frac{2n}{2+n}}dz\right)^{\frac{2+n}{n}}+C.\end{equation}
Using the reverse inequality (see \cite[Chapter V, Theorem 1.2]{Giaquinta1983Multiply}), we could obtain higher integrability, and there exists a $\tau>0$,
depending on $\alpha,\beta,n$ such that
\begin{equation}\label{2.10}\fint_{B(z_0,r)}|\nabla_z \chi^k_y(z)|^{2+\tau}dz\leq C\left(\fint_{B(z_0,2r)}|\nabla_z \chi^k_y(z)|^2dz\right)^{\frac{2+\tau}{2}}+C.\end{equation} Consequently, a covering argument will give the desired estimate due to $||\chi^k_y(z)||_{W^{1,2}(Z)}\leq C$ and $\chi^k_y(z)$ is Z-periodic.
\end{proof}
In the following three lemmas, we introduce three flux correctors which will be useful for obtaining the convergence rates.
\begin{lemma}(flux corrector $E_1$)Let
\begin{equation}\label{2.11}I_{1,ij}(y,z)=-a_{ij}(y,z)+a_{ik}(y,z)\partial_{z_k}\chi^j_y(z)+\fint_Z\left(a_{ij}(y,z)-a_{ik}(y,z)\partial_{z_k}\chi^j_y(z)\right)dz,\end{equation}
where $y\in Y$ and $z\in Z$. Then there hold: $(i)$ $\fint_ZI_{1,ij}(y,\cdot)dz=0$; $(ii)$ $\partial_{z_i}I_{1,ij}=0$ for any $j=1,\cdots,n$. Moreover,
there exists the so-called flux corrector $E_{1,kij}(y,\cdot)\in H^1_{\text{per}}(Z)$ such that
\begin{equation}\label{2.12}
I_{1,ij}(y,z)=\partial_{z_k}E_{1,kij}(y,z) \text{ and } E_{1,kij}=-E_{1,ikj},
\end{equation}
and there hold  the following estimates
\begin{equation}\label{2.13}\fint_Z|E_{1,kij}(y',z)-E_{1,kij}(y,z)|^2dz+\fint_Z|\nabla_z(E_{1,kij}(y',z)-E_{1,kij}(y,z))|^2dz\leq C|y-y'|^2\end{equation}
for any $k,i,j=1,\cdots,n$, where $C$ depends on $\alpha,\beta,M$ and $n$.\end{lemma}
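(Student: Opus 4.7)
The plan is to verify the two algebraic identities (i)--(ii) directly from the definitions, then produce $E_{1,kij}$ through the standard periodic antisymmetric ansatz based on a scalar potential, and finally extract (\ref{2.13}) from an energy estimate for the periodic Laplacian combined with the Lipschitz dependence of $\chi^j_y$ on $y$ established in Lemma 2.1.

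For (i), I would simply note that subtracting the $Z$-average in the definition (\ref{2.11}) forces $\fint_Z I_{1,ij}(y,\cdot)\,dz=0$. For (ii), the cell problem (\ref{1.7}) can be rewritten as $\partial_{z_i}\bigl(a_{ij}(y,z)-a_{ik}(y,z)\partial_{z_k}\chi^j_y(z)\bigr)=0$; since the averaged term in (\ref{2.11}) depends only on $y$, its $z$-divergence vanishes, so $\partial_{z_i}I_{1,ij}=0$.

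To build $E_{1,kij}$ I would use the by-now-classical device from periodic homogenization. For each pair $(i,j)$, let $f_{ij}(y,\cdot)\in H^2_{\mathrm{per}(Z)}$ be the unique mean-zero periodic solution of $\Delta_z f_{ij}(y,z)=I_{1,ij}(y,z)$, which is solvable thanks to (i). Define
\begin{equation*}
E_{1,kij}(y,z)=\partial_{z_k}f_{ij}(y,z)-\partial_{z_i}f_{kj}(y,z),
\end{equation*}
which is manifestly antisymmetric in $(k,i)$. A short computation yields $\partial_{z_k}E_{1,kij}=\Delta_z f_{ij}-\partial_{z_i}\bigl(\partial_{z_k}f_{kj}\bigr)$, so it remains to show $\partial_{z_k}f_{kj}\equiv 0$. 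Applying $\partial_{z_k}$ to the Poisson equation for $f_{kj}$ and using (ii) gives $\Delta_z(\partial_{z_k}f_{kj})=\partial_{z_k}I_{1,kj}=0$, and since $\partial_{z_k}f_{kj}$ is periodic with zero mean, it must vanish identically. Hence $\partial_{z_k}E_{1,kij}=I_{1,ij}$.

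For the Lipschitz estimate (\ref{2.13}), the crucial input is that $I_{1,ij}(\cdot,z)$ is Lipschitz in $y$ with values in $L^2(Z)$. Writing
\begin{equation*}
I_{1,ij}(y',z)-I_{1,ij}(y,z)=\bigl(a_{ik}(y',z)-a_{ik}(y,z)\bigr)\partial_{z_k}\chi^j_{y'}+a_{ik}(y,z)\partial_{z_k}\bigl(\chi^j_{y'}-\chi^j_y\bigr)+\bigl(a_{ij}(y,z)-a_{ij}(y',z)\bigr)+R(y',y),
\end{equation*}
where $R(y',y)$ is the difference of the mean-zero correction term, and combining the assumption (\ref{1.3}) with the bounds (\ref{2.3}) and (\ref{2.5}), I obtain $\|I_{1,ij}(y',\cdot)-I_{1,ij}(y,\cdot)\|_{L^2(Z)}\le C|y-y'|$. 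Since $f_{ij}(y',\cdot)-f_{ij}(y,\cdot)$ is the mean-zero periodic solution of the Laplace equation with this difference as right-hand side, the standard $H^2$ regularity on the torus yields $\|f_{ij}(y',\cdot)-f_{ij}(y,\cdot)\|_{H^2(Z)}\le C|y-y'|$, and (\ref{2.13}) then follows because $E_{1,kij}$ is a first-order $z$-derivative of $f$.

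The main obstacle is the construction step: one must spot the antisymmetric ansatz and verify via (ii) that the divergence condition propagates through the Laplacian to force $\partial_{z_k}f_{kj}\equiv 0$. Once the potential $f_{ij}$ is in hand, the Lipschitz estimate is essentially a bookkeeping exercise that repackages the regularity already proved in Lemma 2.1.
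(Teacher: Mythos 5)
Your proposal is correct and follows essentially the same route as the paper: verify (i) from the mean-zero structure and (ii) from the cell problem (\ref{1.7}), construct $E_{1,kij}=\partial_{z_k}f_{1,ij}-\partial_{z_i}f_{1,kj}$ via a periodic potential solving $\Delta_z f_{1,ij}=I_{1,ij}$, and deduce (\ref{2.13}) from the $H^2$ estimate for the Laplacian together with the Lipschitz-in-$y$ bound on $I_{1,ij}$ coming from (\ref{1.3}) and (\ref{2.5}). The only cosmetic differences are that you spell out the harmonicity argument showing $\partial_{z_k}f_{kj}\equiv 0$ (which the paper leaves implicit) and you obtain the $L^2$ part of (\ref{2.13}) directly from the $H^2$ bound rather than via Poincar\'e's inequality.
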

\begin{proof}The $(i)$ and $(ii)$ follow from the definition $(\ref{2.11})$ and $(\ref{1.7})$, respectively. By $(i)$ and $(ii)$, there exists $f_{1,ij}(y,\cdot)\in H^2(Z)$ such that
$\Delta
f_{1,ij}(y,\cdot)=I_{1,ij}(y,\cdot)$ in $Z$. Let $E_{1,kij}(y,\cdot)=\partial_{z_k}f_{1,ij}(y,\cdot)-\partial_{z_i}f_{1,kj}(y,\cdot)$, then
$E_{1,kij}=-E_{1,ikj}$ is clear, and $I_{1,ij}(y,z)=\partial_{z_k}{E_{1,kij}(y,z)}$ follows form the fact (ii). Then, for any $y,y'$, and due to the $H^2$ estimate of Laplace equation, there hold
$$\begin{aligned}
\int_{Z}|\nabla_{z}E_{1,kij}(y,\cdot)-\nabla_{z}E_{1,kij}(y',\cdot)|^2dz&\leq \int_{Z}|\nabla^2_{z}(f_{1,ij}(y,\cdot)-f_{1,ij}(y',\cdot))|^2dz\\
&\leq C\int_{2Z}|I_{1,ij}(y,z)-I_{1,ij}(y',z)|^2dz\\
&\leq C|y-y'|^2,
\end{aligned}$$ where we have used $(\ref{1.3})$ and $(\ref{2.5})$ in the last inequality. Consequently,
the estimate above together with Poincar\'{e}' inequality completes the proof of $(\ref{2.13})$.
\end{proof}

\begin{lemma}(flux corrector $E_2$)Let
\begin{equation}\label{2.14}\begin{aligned}
I_{2,ij}(y)=\widehat{a}_{ij}&+\fint_Z\left(a_{ik}(y,z)\partial_{y_k}\chi^j(y)-a_{ik}(y,z)\partial_{z_k}\chi^l_y(z)\partial_{y_l}\chi^j(y)\right)dz\\
&-\fint_Z\left(a_{ij}(y,z)-a_{ik}(y,z)\partial_{z_k}\chi^j_y(z)\right)dz,
\end{aligned}\end{equation}\end{lemma}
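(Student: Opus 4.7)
My plan is to follow the blueprint of Lemma 2.3: verify that $I_{2,ij}$ has mean zero on $Y$ and is divergence-free in $y$, then construct $E_{2,kij}$ componentwise via a Poisson equation, and finally read off the regularity from Lemma 2.1.

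The decisive step is an algebraic rewriting. Introducing $b_{ij}(y)=\fint_Z\bigl(a_{ij}(y,z)-a_{ik}(y,z)\partial_{z_k}\chi^j_y(z)\bigr)dz$ as in the proof of Lemma 2.1 and relabelling the dummy indices in the integrals of $(\ref{2.14})$, the two contributions containing $\nabla_y\chi^j$ collapse into $b_{ik}(y)\partial_{y_k}\chi^j(y)$, so that
\begin{equation*}
I_{2,ij}(y)=\widehat{a}_{ij}-b_{ik}(y)\,\partial_{y_k}\bigl(y_j-\chi^j(y)\bigr).
\end{equation*}
Two facts now fall out. First, the cell equation $(\ref{1.8})$ reads precisely as $\partial_{y_i}\bigl[b_{ik}\partial_{y_k}(y_j-\chi^j)\bigr]=0$, so $\partial_{y_i}I_{2,ij}=0$. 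Second, inserting the formula $(\ref{1.6})$ into $\fint_Y I_{2,ij}\,dy$ and matching each of the four pieces of $\widehat{a}_{ij}$ against the corresponding $Y$-average of $(\ref{2.14})$ produces total cancellation, so $\fint_Y I_{2,ij}\,dy=0$.

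Granted the mean-zero and divergence-free properties, there exists a $Y$-periodic $f_{2,ij}\in H^2(Y)$ satisfying $\Delta_y f_{2,ij}=I_{2,ij}$ in $Y$, and the antisymmetric choice $E_{2,kij}(y):=\partial_{y_k}f_{2,ij}-\partial_{y_i}f_{2,kj}$ gives both $E_{2,kij}=-E_{2,ikj}$ and $\partial_{y_k}E_{2,kij}=I_{2,ij}$. For the regularity, Lemma 2.1 supplies $\chi^j\in W^{2,p}(Y)$ and $b_{ij}\in W^{1,\infty}(Y)$, so $I_{2,ij}\in L^\infty(Y)$; applying the $W^{2,p}$ estimate for the periodic Laplacian transports this bound into a $W^{1,p}(Y)$ estimate for $E_{2,kij}$ for every $p\in(1,\infty)$. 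The only genuine obstacle has already been paid for in Lemma 2.1: once $\chi^j\in W^{2,p}(Y)$ is in hand, the construction parallels the $E_1$ case, with the simplification that $E_{2,kij}$ has no $z$-dependence and therefore no analogue of the H\"older-in-$y$ estimate $(\ref{2.13})$ is required.
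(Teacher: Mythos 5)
Your proposal is correct and takes essentially the same route as the paper: you rewrite $I_{2,ij}=\widehat{a}_{ij}-b_{ik}(y)\,\partial_{y_k}\bigl(y_j-\chi^j(y)\bigr)$ so that the cell equation (1.8) gives $\partial_{y_i}I_{2,ij}=0$, and then construct $E_{2,kij}$ by antisymmetrizing gradients of a periodic solution of $\Delta f_{2,ij}=I_{2,ij}$, exactly as in the paper's proof. The only (harmless) deviations are that you verify $\fint_Y I_{2,ij}\,dy=0$ directly from the formula (1.6) for $\widehat{a}_{ij}$, whereas the paper assumes this in the lemma and checks it later in the proof of Lemma 3.1, and that you derive a $W^{1,p}$ bound via $I_{2,ij}\in L^\infty(Y)$ where the stated estimate (2.16) only requires the $H^1(Y)$ bound obtained from the $H^2$ theory with $I_{2,ij}\in L^2(Y)$.
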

where $y\in Y$, we assume that $\fint_YI_{2,ij}(y)dy=0$ in addition. Then  $\partial_{y_i}I_{2,ij}=0$ for any $j=1,\cdots,n$. Moreover,
there exists the so-called flux corrector $E_{2,kij}(y)\in H^1_{\text{per}}(Y)$ such that
\begin{equation}\label{2.15}
I_{2,ij}(y)=\partial_{y_k}E_{2,kij}(y) \text{ and } E_{2,kij}=-E_{2,ikj},
\end{equation}
and the estimate \begin{equation}\label{2.16}
||E_{2,kij}||_{H^1(Y)}\leq C,
\end{equation}
for any $k,i,j=1,\cdots,n$, where $C$ depends on $\alpha,\beta,M$ and $n$.
\begin{proof}$\partial_{z_i}I_{1,ij}=0$ follows from  $(\ref{1.8})$ . By $\partial_{z_i}I_{1,ij}=0$ and the assumption $\fint_YI_{2,ij}(y)dy=0$, there exists $f_{2,ij}(y)\in H^2(Y)$ such that
$\Delta
f_{2,ij}(y)=I_{2,ij}(y)$ in $Y$. Let $E_{2,kij}(y)=\partial_{y_k}f_{2,ij}(y)-\partial_{y_i}f_{2,kj}(y)$, then
$E_{2,kij}=-E_{2,ikj}$ is clear, and $I_{2,ij}(y)=\partial_{y_k}{E_{2,kij}(y)}$ follows form the fact $\partial_{z_i}I_{2,ij}=0$.
Then according to the $H^2$ estimate of Laplace equation, there holds
$$||E_{2,kij}||_{H^1(Y)}\leq C||\nabla f_{2,ij}||_{H^1(Y)}\leq C ||I_{2,ij}||_{L^2(2Y)}\leq C,$$
where we have used $(\ref{2.2})$ in the above equality, and thus completes the proof.
\end{proof}

\begin{lemma}(flux corrector $E_3$)Let
\begin{equation}\label{2.17}\begin{aligned}
I_{3,ij}(y,z)=&a_{ik}(y,z)\partial_{y_k}\chi^j(y)-a_{ik}(y,z)\partial_{z_k}\chi^l_y(z)\partial_{y_l}\chi^j(y)\\
&-\fint_Z\left(a_{ik}(y,z)\partial_{y_k}\chi^j(y)-a_{ik}(y,z)\partial_{z_k}\chi^l_y(z)\partial_{y_l}\chi^j(y)\right)dz,
\end{aligned}\end{equation}
where $y\in Y$ and $z\in Z$.Then there hold: $(i)$ $\fint_ZI_{3,ij}(y,\cdot)dz=0$; $(ii)$ $\partial_{z_i}I_{3,ij}=0$ for any $j=1,\cdots,n$. Moreover,
there exists the so-called flux corrector $E_{3,ij}(y,\cdot)\in H^1_{\text{per}}(Z)$ such that
\begin{equation}\label{2.18}
I_{3,ij}(y,z)=\partial_{z_k}E_{3,kij}(y,z) \text{ and } E_{3,kij}=-E_{3,ikj},
\end{equation}
for any $i,j=1,\cdots,n$.\end{lemma}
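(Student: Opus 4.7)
The plan is to model the proof directly on Lemma 2.3 (flux corrector $E_1$): verify the two structural identities (i) and (ii), and then obtain $E_{3,kij}$ by solving a Poisson problem on the torus $Z$ and antisymmetrizing.

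First I would check (i). By construction, the last term in (2.17) is precisely the $Z$-average of the first two terms, so $\fint_Z I_{3,ij}(y,\cdot)\,dz=0$ is immediate. Next I would verify (ii). After relabeling dummy indices, write
\begin{equation*}
I_{3,ij}(y,z)=\bigl[a_{ik}(y,z)-a_{im}(y,z)\partial_{z_m}\chi^k_y(z)\bigr]\partial_{y_k}\chi^j(y)-\fint_Z[\cdots]\,dz.
\end{equation*}
Since the averaged term is independent of $z$ and $\partial_{y_k}\chi^j(y)$ does not depend on $z$, property (ii) reduces to showing
\begin{equation*}
\partial_{z_i}\bigl[a_{im}(y,z)\partial_{z_m}(z_k-\chi^k_y(z))\bigr]=0,
\end{equation*}
which is exactly the cell equation (1.7) for $\chi^k_y$. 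Thus (ii) holds.

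With (i) and (ii) in hand, the construction of $E_{3,kij}$ is standard. For each fixed $y$, let $f_{3,ij}(y,\cdot)\in H^2_{\mathrm{per}}(Z)$ be the unique mean-zero periodic solution of $\Delta_z f_{3,ij}(y,\cdot)=I_{3,ij}(y,\cdot)$ on $Z$; this is solvable because of (i). Define
\begin{equation*}
E_{3,kij}(y,z)=\partial_{z_k}f_{3,ij}(y,z)-\partial_{z_i}f_{3,kj}(y,z),
\end{equation*}
which is manifestly antisymmetric in $(k,i)$, giving $E_{3,kij}=-E_{3,ikj}$. Computing the divergence gives
\begin{equation*}
\partial_{z_k}E_{3,kij}=\Delta_z f_{3,ij}-\partial_{z_i}(\partial_{z_k}f_{3,kj})=I_{3,ij}-\partial_{z_i}(\partial_{z_k}f_{3,kj}).
\end{equation*}
Here the residual term vanishes: applying $\partial_{z_k}$ to the Poisson equation and using (ii), one sees that $\Delta_z(\partial_{z_k}f_{3,kj})=\partial_{z_k}I_{3,kj}=0$, so $\partial_{z_k}f_{3,kj}$ is harmonic and $Z$-periodic, hence constant. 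This yields (2.18).

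The steps are all routine; I do not foresee a genuine obstacle. The only point that requires any care is the verification of (ii), where the two summands in $I_{3,ij}$ have to be regrouped into the combination $a_{im}\partial_{z_m}(z_k-\chi^k_y)$ so that the cell equation (1.7) can be invoked. Everything else is a verbatim repetition of the flux-corrector argument already carried out for $E_1$ in Lemma 2.3.
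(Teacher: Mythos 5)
Your proof is correct and takes essentially the same route as the paper's Lemma 2.5, which itself just refers back to the $E_1$-construction in Lemma 2.3: verify (i) and (ii), solve $\Delta_z f_{3,ij}=I_{3,ij}$ in periodic mean-zero functions on $Z$, and antisymmetrize to get $E_{3,kij}=\partial_{z_k}f_{3,ij}-\partial_{z_i}f_{3,kj}$. One small observation: the paper's proof attributes property (ii) to equation (1.8), but as your regrouping into $a_{im}\partial_{z_m}(z_k-\chi^k_y)$ makes clear, it is the cell equation (1.7) for $\chi^k_y$ that actually kills the $z$-divergence (the reference to (1.8) in the paper appears to be a slip).
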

\begin{proof}The $(i)$ and $(ii)$ follow from the definition $(\ref{2.17})$ and $(\ref{1.8})$, respectively. By $(i)$ and $(ii)$, there exists $f_{3,ij}(y,\cdot)\in H^2(Z)$ such that
$\Delta
f_{3,ij}(y,\cdot)=I_{3,ij}(y,\cdot)$ in $Z$. Let $E_{3,kij}(y,\cdot)=\partial_{z_k}f_{3,ij}(y,\cdot)-\partial_{z_i}f_{3,kj}(y,\cdot)$. Similarly to the reasons in Lemma 2.3, we complete the proof.
\end{proof}

To deal with the convergence rates in the next section, we introduce an $\varepsilon$-smoothing operator $S_\varepsilon$.\\
\begin{defn} Fix a nonnegative function $\rho\in C_0^\infty(B(0,1/2))$ such that $\int_{\mathbb{R}^n}\rho dx=1$. For $\varepsilon>0,$
define \begin{equation}\label{2.19}
S_\varepsilon(f)(x)=\rho_\varepsilon\ast f(x)=\int_{\mathbb{R}^n}f(x-y)\rho_\varepsilon(y)dy,\end{equation}
where $\rho_\varepsilon(y)=\varepsilon^{-n}\rho(y/\varepsilon)$. \end{defn}
\begin{lemma} (i) If $f\in L^p(\mathbb{R}^n)$ for some $1\leq p<\infty.$ Then for any $g\in L^{p}_{\text{per}}(\mathbb{R}^n)$ ($g$ is $Y$-periodic),
\begin{equation}\label{2.20}\begin{cases}
||g(\cdot/\varepsilon)S_\varepsilon(f)||_{L^p(\mathbb{R}^n)}\leq C(p,n)||g||_{L^p(Y)}||f||_{L^p(\mathbb{R}^n)}\\
||g(\cdot/\varepsilon)\nabla S_\varepsilon(f)||_{L^p(\mathbb{R}^n)}\leq C(p,n)\varepsilon^{-1}||g||_{L^p(Y)}||f||_{L^p(\mathbb{R}^n)},
\end{cases}\end{equation} and if $0<\varepsilon\leq 1$, we have
\begin{equation}\label{2.21}
||g(\cdot/\varepsilon^2)S_\varepsilon(f)||_{L^p(\mathbb{R}^n)}\leq C(p,n)||g||_{L^p(Y)}||f||_{L^p(\mathbb{R}^n)}.
\end{equation}
(ii) If $f\in W^{1,p}(\mathbb{R}^n)$ for some $1\leq p<\infty.$ Then
\begin{equation}\label{2.22}
||S_\varepsilon(f)-f||_{L^p(\mathbb{R}^n)}\leq C(n,p)\varepsilon||\nabla f||_{L^p(\mathbb{R}^n)}.
\end{equation}
(iii) If $g\in H^1(\mathbb{R}^{n})$, then
\begin{equation}\label{2.23}\begin{cases}
||S_\varepsilon(g)-g||_{{L^2}(\mathbb{R}^{n})}\leq C \varepsilon||g||_{{\dot{H}^{1}}(\mathbb{R}^{n})}\\
||S_\varepsilon(g)||_{{\dot{H}^{1/2}}(\mathbb{R}^{n})}\leq C||g||_{{\dot{H}^{1/2}}(\mathbb{R}^{n})}\\
||S_\varepsilon(g)-g||_{{\dot{H}^{1/2}}(\mathbb{R}^{n})}\leq C \varepsilon^{1/2}||g||_{{\dot{H}^{1}}(\mathbb{R}^{n})}\\
||S_\varepsilon(g)||_{{\dot{H}^{3/2}}(\mathbb{R}^{n})}\leq C \varepsilon^{-1/2}||g||_{{\dot{H}^{1}}(\mathbb{R}^{n})}\\
\end{cases}\end{equation}
 in which the constant C depends on $n$.\end{lemma}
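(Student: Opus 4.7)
The plan is to handle the three parts separately, using Jensen's inequality plus periodic averaging for (i), the fundamental theorem of calculus for (ii), and Fourier multiplier bounds for (iii). Since $\rho_\varepsilon\ge 0$ and $\int\rho_\varepsilon=1$, Jensen's inequality gives the pointwise bound $|S_\varepsilon(f)(x)|^p\le(\rho_\varepsilon\ast|f|^p)(x)$. Multiplying by $|g(x/\varepsilon)|^p$, integrating, and then rescaling $x=y+\varepsilon w$ via Fubini reduces the first estimate in (i) to showing $\int|g(y/\varepsilon+w)|^p\rho(w)\,dw\le C\|g\|_{L^p(Y)}^p$ uniformly in $y$. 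Since $\rho$ is supported in $B(0,1/2)$, this quantity is controlled by $\int_{B(y/\varepsilon,1/2)}|g|^p\,dz$, and as this ball is covered by a bounded number of integer translates of $Y$, the $Y$-periodicity of $g$ finishes the argument. The gradient estimate is identical once one observes $\nabla S_\varepsilon(f)=\varepsilon^{-1}(\nabla\rho)_\varepsilon\ast f$; after normalizing $|\nabla\rho|$ so that it becomes (up to a constant) a probability density, the same scheme yields the extra factor $\varepsilon^{-1}$.

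The main obstacle lies in the third estimate of (i), where $g$ is evaluated at $x/\varepsilon^2$ while the smoothing acts at the smaller scale $\varepsilon$. Rescaling now produces $\int|g(y/\varepsilon^2+w/\varepsilon)|^p\rho(w)\,dw$, and the further substitution $z=w/\varepsilon$ gives $\varepsilon^n\int|g(y/\varepsilon^2+z)|^p\rho(\varepsilon z)\,dz$, which is an integral of $|g|^p$ over the enlarged ball $B(y/\varepsilon^2,1/(2\varepsilon))$. The crucial point is that for $\varepsilon\le 1$ this ball is still covered by at most $C(1+1/\varepsilon)^n$ integer translates of $Y$, so periodicity yields the bound $C\varepsilon^{-n}\|g\|_{L^p(Y)}^p$ and the $\varepsilon^n$ Jacobian absorbs the loss exactly. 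Dropping the restriction $\varepsilon\le 1$ would break this covering bound, which is exactly why the hypothesis appears in the statement.

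For (ii), I would write $f(x)-S_\varepsilon(f)(x)=\int(f(x)-f(x-y))\rho_\varepsilon(y)\,dy$, use the identity $f(x)-f(x-y)=\int_0^1 y\cdot\nabla f(x-ty)\,dt$ (valid for $f\in W^{1,p}$ by density from $C_0^\infty$), and note that $|y|\le\varepsilon/2$ on the support of $\rho_\varepsilon$. Minkowski's integral inequality together with the translation invariance of $\|\cdot\|_{L^p}$ then delivers the $\varepsilon\|\nabla f\|_{L^p}$ bound.

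Part (iii) is handled on the Fourier side, where $\widehat{S_\varepsilon g}(\xi)=\hat\rho(\varepsilon\xi)\hat g(\xi)$. Three facts are used: $|\hat\rho(\eta)-1|\le C|\eta|$ near $0$ (since $\hat\rho\in C^\infty$ and $\hat\rho(0)=\int\rho=1$), the trivial bound $|\hat\rho|\le 1$, and rapid decay $|\hat\rho(\eta)|\le C_N(1+|\eta|)^{-N}$ because $\rho\in C_0^\infty$. Plancherel then reduces each estimate to a pointwise symbol bound: the first inequality uses $|\hat\rho(\varepsilon\xi)-1|^2\le C\varepsilon^2|\xi|^2$; the second is immediate from $|\hat\rho|\le 1$; the third interpolates the previous two via $|\hat\rho(\varepsilon\xi)-1|^2\le C\min(1,\varepsilon^2|\xi|^2)\le C\varepsilon|\xi|$. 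Only the fourth bound requires a mild frequency split: on $\{|\xi|\le\varepsilon^{-1}\}$ one uses $|\hat\rho|\le 1$ to get $|\xi|\,|\hat\rho(\varepsilon\xi)|^2\le\varepsilon^{-1}$, and on $\{|\xi|>\varepsilon^{-1}\}$ the rapid decay with $N=2$ yields the same bound.
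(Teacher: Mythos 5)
Your proof of \eqref{2.21} --- the only estimate in this lemma that the paper actually proves rather than cites --- is essentially the paper's argument: the Jensen/H\"older pointwise bound $|S_\varepsilon(f)|^p\leq \rho_\varepsilon\ast|f|^p$, Fubini, rescaling so that $|g|^p$ is integrated over a ball of radius $1/(2\varepsilon)$, and the covering of that ball by $C\varepsilon^{-n}$ integer translates of $Y$ (which is exactly where $0<\varepsilon\leq 1$ enters), with the Jacobian $\varepsilon^{n}$ absorbing the loss. The remaining estimates in (i)--(iii) are simply quoted in the paper from \cite[Propositions 3.1.5, 3.1.6]{shen2018periodic}, and your self-contained proofs of them (Jensen plus periodic covering for \eqref{2.20}, the fundamental theorem of calculus with Minkowski's inequality for \eqref{2.22}, and Plancherel with the symbol bounds $|\hat\rho|\leq 1$, $|\hat\rho(\eta)-1|\leq C|\eta|$ and rapid decay for \eqref{2.23}) are correct and standard.
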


\begin{proof}For the proof of (i), see for example \cite[Proposition 3.1.5]{shen2018periodic}, for the proof of (ii) and (iii), see for example \cite[Proposition 3.1.6]{shen2018periodic}. Therefore, we need only give the proof of $(\ref{2.21})$. By H\"{o}lder's inequality,
$$\left|S_\varepsilon(f)(x)\right|^p\leq\int_{\mathbb{R}^n}\left|f(y)\right|^p\rho_{\varepsilon}(x-y)dy.$$
This together with Fubini's Theorem, gives
\begin{equation}\label{2.24}\begin{aligned}
\int_{\mathbb{R}^n}\left|g(x/\varepsilon^2)\right|^p\left|S_\varepsilon(f)(x)\right|^pdx&\leq\iint_{\mathbb{R}^n\times
\mathbb{R}^n}\left|g(x/\varepsilon^2)\right|^p\left|f(y)\right|^p\rho_{\varepsilon}(x-y)dx dy\\
&\leq C \sup_{y\in\mathbb{R}^n}\varepsilon^{-n}\int_{|x-y|\leq\varepsilon/2}\left|g(x/\varepsilon^2)\right|^pdx||f||_{L^p(\mathbb{R}^n)}^p\\
&\leq C \sup_{y\in\mathbb{R}^n}\varepsilon^{n}\int_{|x-y|\leq1/(2\varepsilon)}\left|g(x)\right|^pdx||f||_{L^p(\mathbb{R}^n)}^p\\
&\leq C ||f||_{L^p(\mathbb{R}^n)}^p||g||_{L^p(Y)}^p, \end{aligned}\end{equation}
where we use the periodicity of $g$ and note that $0<\varepsilon\leq1$.
\end{proof}
\begin{rmk}Actually, under the assumption of Lemma 2.7 (i), if $0<\varepsilon\leq 1$, for any $\lambda\geq \mu>0$, there holds
\begin{equation}\label{2.25}
||g(\cdot/\varepsilon^\lambda)S_{\varepsilon^\mu}(f)||_{L^p(\mathbb{R}^n)}\leq C(p,n)||g||_{L^p(Y)}||f||_{L^p(\mathbb{R}^n)}.
\end{equation} However, the similar results couldn't hold for the function $g(\cdot/\varepsilon^\lambda)S_{\varepsilon^\mu}(f)$,
if $0<\lambda< \mu$, unless the function $g$ has better regularity.
\end{rmk}
\section{Convergence rates}
First of all, we introduce the following cut-off function $\psi_r\in C_0^1(\Omega)$ associated with $\Sigma_r$:
\begin{equation}\label{3.1}
\psi_r=1\ \ \text{in\ }\Sigma_{2r},\ \ \ \psi_r=0\ \ \text{outside\ }\Sigma_{r},\ \ \ |\nabla\psi_r|\leq C/r,
\end{equation}
where $\Sigma_r=\{x\in\Omega:\text{dist}(x,\partial\Omega)>r\}.$

\begin{lemma}Let $\Omega$ be a bounded Lipschitz domain, and assume that $\mathcal{L}_\varepsilon$ satisfies the assumptions $(\ref{1.2}),\ (\ref{1.3})$
and $(\ref{1.4})$. Suppose that $u_\varepsilon,u_0\in H^1(\Omega)$ satisfy $\mathcal{L}_\varepsilon u_\varepsilon=\mathcal{L}_0 u_0$ in $\Omega$, and the first-order approximating corrector is given by
\begin{equation}\label{3.2}\begin{aligned}
w_\varepsilon(x)=&u_\varepsilon(x)-u_0(x)+\varepsilon \chi^j(x/\varepsilon)\psi_{2\varepsilon}S_\varepsilon\left(\partial_{x_j}u_0\right)\\
&+\varepsilon^2\chi^j_y(x/\varepsilon^2)\left[\psi_{2\varepsilon}S_\varepsilon\left(\partial_{x_j}u_0\right)
-\partial_{y_j}\chi^{k}(x/\varepsilon)\psi_{2\varepsilon}S_\varepsilon\left(\partial_{x_k}u_0\right)\right],
\end{aligned}\end{equation} where $\chi^k_y(x/\varepsilon^2)=\chi^k(x/\varepsilon,x/\varepsilon^2)$.
Then for any $\phi\in H^1_0(\Omega)$, it gives
\begin{equation}\label{3.3}\begin{aligned}
\left|\int_\Omega a_{ij}(x/\varepsilon,x/\varepsilon^2)\partial_jw_\varepsilon\partial_i\phi dx\right|
\leq C\varepsilon&\left(||\nabla u_0||_{L^2(\Omega)}+||\nabla^2 u_0||_{L^2(\Omega)}\right)||\nabla \phi||_{L^2(\Omega)}\\
&+C||\nabla u_0||_{L^2(\Omega\setminus\Sigma_{5\varepsilon})}||\nabla \phi||_{L^2(\Omega\setminus\Sigma_{4\varepsilon})},
\end{aligned}\end{equation}
where $C$ depends on $\alpha,\beta,M$ and $n$.

\end{lemma}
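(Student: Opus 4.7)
My plan is to carry out a two-scale energy computation, with the twist that we now have two correctors living at different scales and must feed in the three flux correctors $E_1,E_2,E_3$ from Lemmas 2.3--2.5. First I would apply the chain rule to compute $\partial_m w_\varepsilon$ term by term; because $\chi^j_y(z)$ depends on both $y$ and $z$, differentiating $\varepsilon^2\chi^j(x/\varepsilon,x/\varepsilon^2)$ in $x$ produces an $O(1)$ contribution $(\partial_{z_m}\chi^j)(x/\varepsilon,x/\varepsilon^2)$ together with an $O(\varepsilon)$ contribution $\varepsilon(\partial_{y_m}\chi^j)(x/\varepsilon,x/\varepsilon^2)$. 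Then for any $\phi\in H^1_0(\Omega)$ I use $\mathcal{L}_\varepsilon u_\varepsilon=\mathcal{L}_0 u_0$ to replace $\int a_{ij}(x/\varepsilon,x/\varepsilon^2)\partial_j u_\varepsilon\,\partial_i\phi\,dx$ by $\int \widehat a_{ij}\partial_j u_0\,\partial_i\phi\,dx$, so that $\int a_{ij}\partial_j w_\varepsilon\partial_i\phi\,dx$ reduces to an expression involving only $u_0$, the correctors, and $\phi$.

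Collecting the $\varepsilon^0$-terms of this expression, I would check that the coefficient of $\Phi_j:=\psi_{2\varepsilon}S_\varepsilon(\partial_{x_j}u_0)$ is precisely
$$a_{ik}\partial_{y_k}\chi^j+a_{ik}\partial_{z_k}\chi^j_y-a_{ik}\partial_{z_k}\chi^l_y\,\partial_{y_l}\chi^j,$$
evaluated at $(y,z)=(x/\varepsilon,x/\varepsilon^2)$. Adding the leftover $(\widehat a_{ij}-a_{ij})\Phi_j$ and using the identity $\fint_Y I_{2,ij}\,dy=0$ (which falls out of \eqref{1.6}), the whole principal part collapses to $(I_{1,ij}+I_{2,ij}+I_{3,ij})\Phi_j$: the $b_{ij}(y)$ terms in $I_1$ and $I_2$ cancel each other, and the $\fint_Z$-averages in $I_2$ and $I_3$ do the same. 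The residual piece $(\widehat a_{ij}-a_{ij})(\partial_j u_0-\Phi_j)$ splits into $\psi_{2\varepsilon}(S_\varepsilon(\partial_j u_0)-\partial_j u_0)$, controlled by $\varepsilon\,||\nabla^2 u_0||_{L^2(\Omega)}$ via \eqref{2.22}, and $(1-\psi_{2\varepsilon})\partial_j u_0$, controlled by $||\nabla u_0||_{L^2(\Omega\setminus\Sigma_{4\varepsilon})}$.

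Next I use the rescaling identities
$$I_{1,ij}(x/\varepsilon,x/\varepsilon^2)=\varepsilon^2\partial_{x_k}\bigl[E_{1,kij}(x/\varepsilon,x/\varepsilon^2)\bigr]-\varepsilon(\partial_{y_k}E_{1,kij})(x/\varepsilon,x/\varepsilon^2),$$
$I_{2,ij}(x/\varepsilon)=\varepsilon\partial_{x_k}[E_{2,kij}(x/\varepsilon)]$, and the analog of the first one for $I_3$. This rewrites the principal contribution as $\partial_{x_k}\bigl[\mathcal{E}_{kij}(x)\Phi_j\bigr]$ plus terms carrying an explicit factor of $\varepsilon$, where $\mathcal{E}_{kij}:=\varepsilon^2 E_{1,kij}(x/\varepsilon,x/\varepsilon^2)+\varepsilon E_{2,kij}(x/\varepsilon)+\varepsilon^2 E_{3,kij}(x/\varepsilon,x/\varepsilon^2)$ is antisymmetric in $(k,i)$. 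Integrating $\int\partial_{x_k}(\mathcal{E}_{kij}\Phi_j)\partial_i\phi\,dx$ by parts yields $-\int \mathcal{E}_{kij}\Phi_j\,\partial^2_{x_kx_i}\phi\,dx-\int \mathcal{E}_{kij}(\partial_{x_k}\Phi_j)\partial_i\phi\,dx$, the first integral vanishing because $\partial^2_{x_kx_i}\phi$ is symmetric in $(k,i)$ while $\mathcal{E}_{kij}=-\mathcal{E}_{ikj}$. The remaining integral is bounded via Cauchy--Schwarz using the uniform estimates \eqref{2.13}, \eqref{2.16} and the $L^2$ control on $E_3$ from Lemma 2.5, together with $||\partial_{x_k}\Phi_j||_{L^2}\leq C\,||\nabla^2u_0||_{L^2(\Omega)}+C\varepsilon^{-1}||\nabla u_0||_{L^2(\Omega\setminus\Sigma_{5\varepsilon})}$; the explicit $\varepsilon$ in front of $\mathcal{E}$ then delivers the desired $O(\varepsilon)$, while the $\varepsilon^{-1}$ is absorbed against it on the boundary strip.

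The explicit $\varepsilon$-remainders I already set aside (those coming from the chain-rule expansion of $\partial_m w_\varepsilon^{(1)},\partial_m w_\varepsilon^{(2)}$ and from the $\partial_{y_k}E$ terms in the rescaling identities) I would estimate directly by combining the uniform $L^p$ bounds on $\chi^j,\chi^j_y,\nabla_y\chi^j_y$ from Lemmas 2.1--2.2 with the periodic-smoothing estimates \eqref{2.20}--\eqref{2.21} applied to $\Phi_j$. The main obstacle I anticipate is the principal-order bookkeeping: verifying that the chain-rule contributions assemble exactly into $I_1+I_2+I_3$ (so the $b_{ij}$ and $\fint_Z$ pieces cancel in a way that produces a global $x$-divergence), and then keeping the two rescalings $x/\varepsilon\mapsto y$ and $x/\varepsilon^2\mapsto z$ straight when converting $I_1,I_3$ (defined as $z$-divergences of $E$) into $x$-divergences without spoiling the antisymmetry $E_{kij}=-E_{ikj}$ that is the keystone of the whole argument.
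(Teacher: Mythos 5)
Your outline reproduces the paper's skeleton faithfully up to a point: the chain-rule expansion, the identification of the $\varepsilon^0$-coefficient of $\psi_{2\varepsilon}S_\varepsilon(\partial_ju_0)$ with $I_{1,ij}+I_{2,ij}+I_{3,ij}$, the normalization $\fint_YI_{2,ij}\,dy=0$ coming from (1.6), the rescaling of the $z$-divergences into $x$-divergences plus $\varepsilon\,\partial_yE$ remainders, the use of the antisymmetry $E_{kij}=-E_{ikj}$ to kill the $\partial^2_{x_kx_i}\phi$ term, and the treatment of $(\widehat a_{ij}-a_{ij})(\partial_ju_0-\psi_{2\varepsilon}S_\varepsilon(\partial_ju_0))$ and of the single-scale $E_2$ term all match Lemma 3.1 and (3.22)--(3.24) of the paper. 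But there is a genuine gap exactly where the paper's main work lies. After these reductions you are left with integrals of the form $\int_\Omega G(x/\varepsilon,x/\varepsilon^2)\,\psi_{2\varepsilon}S_\varepsilon(\nabla u_0)\,\nabla\phi\,dx$ and $\int_\Omega G(x/\varepsilon,x/\varepsilon^2)\,\partial_x\bigl(\psi_{2\varepsilon}S_\varepsilon(\nabla u_0)\bigr)\,\nabla\phi\,dx$ with $G\in\{E_1,E_3,\partial_yE_1,\partial_yE_3,\partial_y\chi^j_y\}$, i.e.\ with genuinely two-scale integrands known only through $L^2(Y\times Z)$-type bounds. Your plan to close these ``via Cauchy--Schwarz using (2.13), (2.16) and the $L^2$ control on $E_3$'' does not work: $G(x/\varepsilon,x/\varepsilon^2)$ is not periodic at any single scale (unless $1/\varepsilon\in\mathbb{Z}$), so the smoothing estimates (2.20)--(2.21) do not apply to it -- this is precisely the caveat of Remark 2.8 -- and no $L^\infty$ bound on $E_1$, $E_3$ or $\nabla_y\chi^j_y$ is available under (1.2)--(1.4) alone (that would require the extra regularity assumed in the earlier literature the paper is avoiding); moreover (2.16) only concerns the one-scale corrector $E_2$, and Lemma 2.5 gives no quantitative two-scale estimate usable against $S_\varepsilon$. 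The missing inequality, of the type $\|G(\cdot/\varepsilon,\cdot/\varepsilon^2)S_\varepsilon(g)\|_{L^2(\Omega)}\le C\|G\|_{L^2(Y\times Z)}\|g\|_{L^2(\mathbb{R}^n)}$, is exactly what the paper's Proposition 3.2 supplies: expand $G(y,z)$ in a Fourier series in $z$, observe (Remark 1.1) that the coefficients $\widehat G_k(y)$ are $Y$-periodic, so that after bounding $|e^{2\pi\sqrt{-1}kx/\varepsilon^2}|=1$ each term is a single-scale $\varepsilon$-periodic factor to which Lemma 2.7(i) applies, and then sum in $k$ by Cauchy--Schwarz and Plancherel, the summability coming from the weights $k_i|k|^{-2}$ produced by the construction $E=\nabla_z\Delta_z^{-1}I$. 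Your proposal contains no substitute for this mechanism (nor for the alternative of smoothing/freezing in the $y$-variable as in the reiterated-homogenization literature), and without it the central estimate of Lemma 3.1 does not close.

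A secondary omission in the same direction: the term $\varepsilon\,\partial_{y_h}E_{3,hij}(x/\varepsilon,x/\varepsilon^2)\psi_{2\varepsilon}S_\varepsilon(\partial_ju_0)\partial_i\phi$ carries no spare power of $\varepsilon$, so besides the two-scale mechanism it requires the quantitative bound $\|\nabla_yI_3\|_{L^2(Y\times Z)}\le C$; in the paper this is (3.17)--(3.21) and genuinely needs the reverse H\"older estimate (2.7) together with the $W^{2,p}(Y)$ bound on $\chi^j$ and Sobolev embedding to control the product $a_{ik}\,\partial_{z_k}\chi^l_y\,\nabla^2_y\chi^j$. Likewise the term $\varepsilon a_{ih}\partial_{y_h}\chi^j_y(x/\varepsilon^2)[\cdots]$ hidden in $H_{3,i}$ needs its own Fourier expansion and the mixed estimate $\fint_Z|\nabla_y\nabla_z\chi^j_y|^2\le C$ from (2.1). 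Citing the ``uniform $L^p$ bounds from Lemmas 2.1--2.2'' is not enough: you must say how these $Y\times Z$-averaged bounds are transferred to the composite functions evaluated at $(x/\varepsilon,x/\varepsilon^2)$, and that transfer is the whole point of the Fourier argument.
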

\begin{proof}By direct computation,  we have
\begin{equation}\label{3.4}\begin{aligned}
&a_{ih}(x/\varepsilon,x/\varepsilon^2)\partial_hw_\varepsilon\\
=&a_{ih}(\partial_hu_\varepsilon-\partial_hu_0)+a_{ih}\partial_{y_h}\chi^j(x/\varepsilon)\psi_{2\varepsilon}S_\varepsilon\left(\partial_{x_j}u_0\right)
+\varepsilon a_{ih} \chi^j(x/\varepsilon)\partial_h\left(\psi_{2\varepsilon}S_\varepsilon\left(\partial_{x_j}u_0\right)\right)\\
&+a_{ih}\partial_{z_h}\chi^j_y(x/\varepsilon^2)\left[\psi_{2\varepsilon}S_\varepsilon\left(\partial_{x_j}u_0\right)-\partial_{y_j}\chi^{k}(x/\varepsilon)
\psi_{2\varepsilon}S_\varepsilon\left(\partial_{x_k}u_0\right)\right]\\
&+\varepsilon a_{ih}\partial_{y_h}\chi^j_y(x/\varepsilon^2)\left[\psi_{2\varepsilon}S_\varepsilon\left(\partial_{x_j}u_0\right)-\partial_{y_j}\chi^{k}(x/\varepsilon)
\psi_{2\varepsilon}S_\varepsilon\left(\partial_{x_k}u_0\right)\right]\\
&+\varepsilon^2a_{ih}\chi^j_y(x/\varepsilon^2)\left[\partial_h\left(\psi_{2\varepsilon}S_\varepsilon\left(\partial_{x_j}u_0\right)\right)
-\partial_{y_j}\chi^{k}(x/\varepsilon)\partial_h\left(\psi_{2\varepsilon}S_\varepsilon\left(\partial_{x_k}u_0\right)\right)\right]\\
&-\varepsilon a_{ih}\chi^j_y(x/\varepsilon^2)\partial^2_{y_jy_h}\chi^{k}(x/\varepsilon)\psi_{2\varepsilon}S_\varepsilon\left(\partial_{x_k}u_0\right)\\
=&:H_{1,i}+H_{2,i}+H_{3,i}+H_{4,i}
\end{aligned}\end{equation}
and \begin{equation}\label{3.5}\begin{aligned}H_{1,i}=&(\widehat{a}_{ih}-a_{ih})(\partial_hu_0-\psi_{2\varepsilon}S_\varepsilon\left(\partial_{x_h}u_0\right))
+a_{ih}\partial_hu_\varepsilon-\widehat{a}_{ih}\partial_hu_0,\\
H_{2,i}=&[\widehat{a}_{ij}-a_{ij}+a_{ih}\partial_{y_h}\chi^j(x/\varepsilon)+a_{ih}\partial_{z_h}\chi^j_y(x/\varepsilon^2)
-a_{ih}\partial_{z_h}\chi^k_y(x/\varepsilon^2)\partial_{y_k}\chi^{j}(x/\varepsilon)]\psi_{2\varepsilon}S_\varepsilon\left(\partial_{j}u_0\right)\\
H_{3,i}=&\varepsilon a_{ih}\chi^j(x/\varepsilon)\partial_h\left(\psi_{2\varepsilon}S_\varepsilon\left(\partial_{x_j}u_0\right)\right)-\varepsilon a_{ih}\chi^j_y(x/\varepsilon^2)\partial^2_{y_jy_h}\chi^{k}(x/\varepsilon)\psi_{2\varepsilon}S_\varepsilon\left(\partial_{k}u_0\right)\\
&+\varepsilon a_{ih}\partial_{y_h}\chi^j_y(x/\varepsilon^2)\left[\psi_{2\varepsilon}S_\varepsilon\left(\partial_{x_j}u_0\right)
-\partial_{y_j}\chi^{k}(x/\varepsilon)\psi_{2\varepsilon}S_\varepsilon\left(\partial_{x_k}u_0\right)\right],\\
H_{4,i}=&\varepsilon^2a_{ih}\chi^j_y(x/\varepsilon^2)\left[\partial_h\left(\psi_{2\varepsilon}S_\varepsilon\left(\partial_{x_j}u_0\right)\right)
-\partial_{y_j}\chi^{k}(x/\varepsilon)\partial_h\left(\psi_{2\varepsilon}S_\varepsilon\left(\partial_{x_k}u_0\right)\right)\right].
\end{aligned}\end{equation}
To estimate $H_{2,i}$, we observe that
$$H_{2,i}=(I_{1,ij}(x/\varepsilon,x/\varepsilon^2)+I_{2,ij}(x/\varepsilon)+I_{3,ij}(x/\varepsilon,x/\varepsilon^2))\psi_{2\varepsilon}S_\varepsilon(\partial_{j}u_0)$$ with $I_{1,ij}$, $I_{2,ij}$ and $I_{3,ij}$ defined in $(\ref{2.11})$, $(\ref{2.14})$ and $(\ref{2.17})$, respectively.
Then \begin{equation}\label{3.6}\left|\int_\Omega H_{2,i}\partial_i \phi dx\right|=\left|\int_\Omega (I_{1,ij}(x/\varepsilon,x/\varepsilon^2)+I_{2,ij}(x/\varepsilon)+I_{3,ij}(x/\varepsilon,x/\varepsilon^2))\psi_{2\varepsilon}S_\varepsilon(\partial_{j}u_0)\partial_i \phi dx\right|.
\end{equation}

According to $(\ref{1.6})$, we have $\iint_{Y\times Z}I_{1,ij}(y,z)+I_{2,ij}(y)+I_{3,ij}(y,z)dydz=0$, then $\iint_{Y\times Z}I_{2,ij}(y)dydz=0$ due to Lemma 2.3 (i) and Lemma 2.5 (i). Consequently, we have
$\int_{Y}I_{2,ij}(y)dy=0$ which satisfies the assumption of Lemma 2.4.\\

However, the estimates of the term
\begin{equation}\label{3.7}W=:\left|\int_\Omega (I_{1,ij}(x/\varepsilon,x/\varepsilon^2)+I_{3,ij}(x/\varepsilon,x/\varepsilon^2))\psi_{2\varepsilon}S_\varepsilon(\partial_{j}u_0)\partial_i \phi dx\right|=:|W_1+W_2|\end{equation} requires more technical skills, and we will give this estimate in the following proposition.

\begin{pro}Let $W$ be defined by $(\ref{3.7})$, then there holds
\begin{equation}\label{3.8}W\leq C\varepsilon||\nabla u_0||_{L^2(\Omega)}||\nabla \phi||_{L^2(\Omega)}+C\varepsilon^2||\nabla^2 u_0||_{L^2(\Omega)}||\nabla \phi||_{L^2(\Omega)}.\end{equation}
\end{pro}
\begin{proof}The difficulty of the estimate $(3.8)$ is to handle different scales $y\ (=x/\varepsilon)$ and $z\ (=x/\varepsilon^2)$ in the estimate of $W$, so the main idea is to separate
these scales by taking Fourier transform with respect to $z$, and then an important observation is that the Fourier coefficients are $Y$-periodic.
Consequently, Lemma 2.7 (i) may be adapted and thus completes this proof. We only give the proof of $W_2$, since the estimate of $W_1$ is totally the same to $W_2$. Now, we give the details. Recall we assume that $Y=Z=(0,1)^n$.

Taking the Fourier transform of $I_{3,ij}(y,z)$ with respect to $z$ gives that
\begin{equation}\label{3.9}I_{3,ij}(y,z)=\sum_{k\in\mathbb{Z}^n}\widehat{I}_{3,kij}(y)e^{2\pi\sqrt{-1} kz},\end{equation}
where $\widehat{I}_{3,kij}$ is given by
\begin{equation}\label{3.10}\widehat{I}_{3,kij}(y)=\int_{(0,1)^n}I_{3,ij}(y,z)e^{-2\pi\sqrt{-1} kz}dz.\end{equation}
According to Remark 1.1, $\widehat{I}_{3,kij}(y)$ is Y-periodic.
Clearly, as the notations in Lemma 2.5, we have
$$f_{3,ij}(y,z)=-\frac{1}{4\pi^2}\sum_{k\in\mathbb{Z}^n}|k|^{-2}\widehat{I}_{3,kij}(y)e^{2\pi\sqrt{-1} kz},$$
and
\begin{equation}\label{3.11}\begin{aligned}
E_{3,hij}(y,z)&=\partial_{z_h}f_{3,ij}(y,z)-\partial_{z_i}f_{3,hj}(y,z)\\
&=\frac{\sqrt{-1}}{2\pi}\sum_{k\in\mathbb{Z}^n}k_i|k|^{-2}\widehat{I}_{3,khj}(y)e^{2\pi\sqrt{-1} kz}-
\frac{\sqrt{-1}}{2\pi}\sum_{k\in\mathbb{Z}^n}k_h|k|^{-2}\widehat{I}_{3,kij}(y)e^{2\pi\sqrt{-1} kz}\\
&=:E_{31,hij}(y,z)+E_{32,hij}(y,z),
\end{aligned}\end{equation}
then according to $(\ref{2.18})$ and $(\ref{3.7})$, we have
\begin{equation}\label{3.12}\begin{aligned}
\left|W_2\right|=&\left|\int_\Omega \partial_{z_h}E_{3,hij}(x/\varepsilon,x/\varepsilon^2)\psi_{2\varepsilon}S_\varepsilon(\partial_{j}u_0)\partial_i \phi dx\right|\\
\leq& \varepsilon^2\left|\int_\Omega \partial_{x_h}E_{3,hij}(x/\varepsilon,x/\varepsilon^2)\psi_{2\varepsilon}S_\varepsilon(\partial_{j}u_0)\partial_i \phi dx\right|\\
&+\varepsilon\left|\int_\Omega \partial_{y_h}E_{3,hij}(x/\varepsilon,x/\varepsilon^2)\psi_{2\varepsilon}S_\varepsilon(\partial_{j}u_0)\partial_i \phi dx\right|\\
=&\varepsilon^2\left|\int_\Omega(E_{31,hij}+E_{32,hij})(x/\varepsilon,x/\varepsilon^2)\partial_{x_h}\left(\psi_{2\varepsilon}S_\varepsilon(\partial_{j}u_0)\right)\partial_i \phi dx\right|\\
&+\varepsilon\left|\int_\Omega\partial_{y_h}(E_{31,hij}+E_{32,hij})(x/\varepsilon,x/\varepsilon^2)\psi_{2\varepsilon}S_\varepsilon(\partial_{j}u_0)\partial_i \phi dx\right|\\
=&:\varepsilon^2|W_{21}+W_{22}|+\varepsilon|W_{23}+W_{24}|.
\end{aligned}\end{equation}

Consequently, we need only to estimate $|W_{21}|$ and $|W_{23}|$, since the estimates of $W_{22}$ is the same to $W_{21}$, and  $W_{24}$ is the same to $W_{23}$, respectively. We firstly treat $\varepsilon^2|W_{21}|$.
In view of $(\ref{3.11})$ and Lemma 2.7 (i), there holds
\begin{equation}\label{3.13}\begin{aligned}
\varepsilon^2|W_{21}|&\leq\varepsilon^2\int_{\Omega}\left|\sum_{k\in\mathbb{Z}^n}k_i|k|^{-2}\widehat{I}_{3,khj}(x/\varepsilon)\partial_{x_h}
\left(\psi_{2\varepsilon}S_\varepsilon(\partial_{j}u_0)\right)\partial_i \phi\right| dx\\
&\leq C||\nabla \phi||_{L^2(\Omega)}\left(\varepsilon^2||\nabla^2u_0||_{L^2(\Omega)}+\varepsilon||\nabla u_0||_{L^2(\Omega)}\right)\left(\int_{Y}\left|\sum_{k\in\mathbb{Z}^n}k_i|k|^{-2}\widehat{I}_{3,khj}(y)\right|^2dy\right)^{1/2}
\end{aligned}\end{equation}
H\"{o}lder's inequality and Plancherel's Indetity  give that
\begin{equation}\label{3.14}\begin{aligned}\int_{Y}\left|\sum_{k\in\mathbb{Z}^n}k_i|k|^{-2}\widehat{I}_{3,khj}(y)\right|^2dy&\leq
\int_{Y}\sum_{k\in\mathbb{Z}^n}\left|\widehat{I}_{3,khj}(y)\right|^2dy\cdot \sum_{k\in\mathbb{Z}^n}\left|k_i|k|^{-2}\right|^2\\
&\leq C\iint_{Y\times Z}\left|I_{3,hj}(y,z)\right|^2dzdy\\
&\leq C,
\end{aligned}\end{equation} due to $(\ref{2.2})$ and Sobolev embedding inequality. Consequently, combining $(\ref{3.13})$ and $(\ref{3.14})$ gives
\begin{equation}\label{3.15}\varepsilon^2|W_{21}|\leq C||\nabla \phi||_{L^2(\Omega)}\left(\varepsilon^2||\nabla^2u_0||_{L^2(\Omega)}+\varepsilon||\nabla u_0||_{L^2(\Omega)}\right).\end{equation}
As for $W_{23}$, we have
\begin{equation}\label{3.16}\begin{aligned}
\varepsilon|W_{23}|&\leq\varepsilon\int_{\Omega}\left|\sum_{k\in\mathbb{Z}^n}k_i|k|^{-2}\partial_{y_h}\widehat{I}_{3,khj}(x/\varepsilon)
\psi_{2\varepsilon}S_\varepsilon(\partial_{j}u_0)\partial_i \phi\right| dx\\
&\leq C\varepsilon^2||\nabla \phi||_{L^2(\Omega)}||\nabla u_0||_{L^2(\Omega)}\left(\int_{Y}\left|\sum_{k\in\mathbb{Z}^n}k_i|k|^{-2}\partial_{y_h}\widehat{I}_{3,khj}(y)\right|^2dy\right)^{1/2},
\end{aligned}\end{equation}similarly, H\"{o}lder's inequality and Plancherel's Indetity  give that

\begin{equation}\label{3.17}\begin{aligned}\int_{Y}\left|\sum_{k\in\mathbb{Z}^n}k_i|k|^{-2}\partial_{y_h}\widehat{I}_{3,khj}(y)\right|^2dy&\leq
\int_{Y}\sum_{k\in\mathbb{Z}^n}\left|\nabla_y\widehat{I}_{3,khj}(y)\right|^2dy\cdot \sum_{k\in\mathbb{Z}^n}\left|k_i|k|^{-2}\right|^2\\
&\leq C\iint_{Y\times Z}\left|\nabla_yI_{3,hj}(y,z)\right|^2dzdy,
\end{aligned}\end{equation}
 in view of $(\ref{2.17})$,
\begin{equation}\label{3.18}\begin{aligned}
I_{3,ij}(y,z)=&a_{ik}(y,z)\partial_{y_k}\chi^j(y)-a_{ik}(y,z)\partial_{z_k}\chi^l_y(z)\partial_{y_l}\chi^j(y)\\
&-\fint_Z\left(a_{ik}(y,z)\partial_{y_k}\chi^j(y)-a_{ik}(y,z)\partial_{z_k}\chi^l_y(z)\partial_{y_l}\chi^j(y)\right)dz\\
=&:I_{31,ij}(y,z)+I_{32,ij}(y,z)+I_{33,ij}(y,z),
\end{aligned}\end{equation}
then according $(\ref{1.3})$ to $(\ref{2.2})$, it gives
\begin{equation}\label{3.19}\iint_{Y\times Z}\left|\nabla_yI_{31,ij}(y,z)\right|^2dzdy\leq C.\end{equation}
And
\begin{equation}\label{3.20}\begin{aligned}
||\nabla_yI_{32,ij}(y,z)||_{L^2(Y\times Z)}\leq& ||\nabla_y a_{ij}(y,z)||_{L^\infty(Y\times Z)}||\nabla_y \chi^k(y)||_{L^\infty(Y)}||\nabla_z \chi^k_y(z)||_{L^2(Y\times Z)}\\
&+||a_{ij}(y,z)||_{L^\infty(Y\times Z)}||\nabla_y \chi^k(y)||_{L^\infty(Y)}||\nabla_y\nabla_z \chi^k_y(z)||_{L^2(Y\times Z)}\\
&+||a_{ij}(y,z)||_{L^\infty(Y\times Z)}||\nabla^2_y \chi^k(y)||_{L^{(4+2\tau)/\tau}(Y)}||\nabla_z \chi^k_y(z)||_{L^{2+\tau}(Y\times Z)}\\
\leq& C,
\end{aligned}\end{equation} where we have used $(\ref{1.3})$, $(\ref{2.1})$, $(\ref{2.2})$, $(\ref{2.7})$ and the Sobolev embedding inequality in $(\ref{3.20})$. Clearly,
\begin{equation}\label{3.21}||\nabla_y I_{33,ij}(y,z)||_{L^2(Y\times Z)}\leq C\end{equation}
directly follows from $(\ref{3.19})$ and $(\ref{3.20})$. Consequently, combining $(\ref{3.15})-(\ref{3.21})$ leads to the conclusion $(\ref{3.8})$.
\end{proof}

After obtaining the estimate of $W$, we now continue the proof of Lemma 3.1. It is not hard to see that
\begin{equation}\label{3.22}\begin{aligned}\left|\int_\Omega
I_{2,ij}\psi_{2\varepsilon}S_\varepsilon(\partial_{j}u_0)\partial_i \phi dx\right|&=\left|\int_\Omega
\partial_{y_h}E_{2,hij}(x/\varepsilon)\psi_{2\varepsilon}S_\varepsilon(\partial_{j}u_0)\partial_i \phi dx\right|\\
&=\varepsilon\left|\int_\Omega \partial_{x_h}E_{2,hij}(x/\varepsilon)\psi_{2\varepsilon}S_\varepsilon(\partial_{j}u_0)\partial_i \phi
dx\right|\\
&\leq\varepsilon\left|\int_\Omega E_{2,hij}(x/\varepsilon)\partial_{x_h}(\psi_{2\varepsilon}S_\varepsilon(\partial_{j}u_0))\partial_i \phi dx\right|\\
&\leq\varepsilon||\nabla^2 u_0||_{L^2(\Omega)}||\nabla
\phi||_{L^2(\Omega)}+C||\nabla u_0||_{L^2(\Omega\setminus\Sigma_{5\varepsilon})}||\nabla
\phi||_{L^2(\Omega\setminus\Sigma_{4\varepsilon})},\end{aligned}\end{equation} where we have used Lemma 2.7 (i) and $(\ref{2.16})$ in the last inequality in $(\ref{3.22})$.

Therefore, combining the estimates $(\ref{3.8})$ and $(\ref{3.22})$ gives that
\begin{equation}\label{3.23}\begin{aligned}
\left|\int_\Omega H_{2,i}\partial_i \phi dx\right|\leq C\varepsilon&\left(||\nabla u_0||_{L^2(\Omega)}+||\nabla^2 u_0||_{L^2(\Omega)}\right)||\nabla \phi||_{L^2(\Omega)}\\
&+||\nabla u_0||_{L^2(\Omega\setminus\Sigma_{5\varepsilon})}||\nabla \phi||_{L^2(\Omega\setminus\Sigma_{4\varepsilon})}.\end{aligned}\end{equation}
Due to $\mathcal{L}_\varepsilon u_\varepsilon=\mathcal{L}_0 u_0$ in $\Omega$ and $\phi\in H^1_0(\Omega)$, it gives that
\begin{equation}\label{3.24}\begin{aligned}
\left|\int_\Omega H_{1,i}\partial_i \phi dx\right|&=\left|\int_\Omega(\widehat{a}_{ih}-a_{ih})(\partial_hu_0
-\psi_{2\varepsilon}S_\varepsilon\left(\partial_{h}u_0\right))\partial_i \phi dx\right|\\
&\leq C\int_\Omega\left|\nabla u_0-S_\varepsilon\left(\nabla u_0\right)\right|\left|\nabla \phi \right|dx+C\int_\Omega\left| S_\varepsilon(\nabla u_0)-\psi_{2\varepsilon}S_\varepsilon(\nabla u_0)\right|\left|\nabla \phi \right|dx\\
&\leq C\varepsilon||\nabla^2 u_0||_{L^2(\Omega)}||\nabla \phi||_{L^2(\Omega)}
+C||\nabla u_0||_{L^2(\Omega\setminus\Sigma_{5\varepsilon})}||\nabla \phi||_{L^2(\Omega\setminus\Sigma_{4\varepsilon})},
\end{aligned}\end{equation} where we have used the Lemma 2.7 (ii) in the last inequality. \\

In view of the definition of the $H_{3,i}$ and $H_{4,i}$ in $(3.5)$,
in order to estimate the term $\left|\int(H_{3,i}+H_{4,i})\partial_i \phi\right|$, we need only to estimate $\left|\int\varepsilon
a_{ih}\partial_{y_h}\chi^j_y(x/\varepsilon^2)\psi_{2\varepsilon}S_\varepsilon\left(\partial_{x_j}u_0\right)\partial_i \phi\right|$. Similar to the
computation in Proposition 3.2, taking the Fourier transform of $\chi^j(y,z)$ with respect to $z$ leads to
\begin{equation*}\chi^j(y,z)=\sum_{k\in\mathbb{Z}^n}\widehat{\chi}^j_{k}(y)e^{2\pi\sqrt{-1} kz},\end{equation*}
where $\widehat{\chi}^j_{k}(y)$ is given by
\begin{equation*}\widehat{\chi}^j_{k}(y)=\int_{(0,1)^n}\chi^j(y,z)e^{-2\pi\sqrt{-1} kz}dz.\end{equation*}
Then according to Lemma 2.7 (i), we have
\begin{equation*}\begin{aligned}
&\left|\int_\Omega\varepsilon a_{ih}\partial_{y_h}\chi^j_y(x/\varepsilon^2)\psi_{2\varepsilon}S_\varepsilon\left(\partial_{x_j}u_0\right)\partial_i \phi dx\right|\\
\leq&\varepsilon\int_\Omega \left| a_{ih}\sum_{k\in\mathbb{Z}^n}\partial_{y_h}\widehat{\chi}^j_{k}(x/\varepsilon)\psi_{2\varepsilon}S_\varepsilon\left(\partial_{x_j}u_0\right)\partial_i \phi\right| dx\\
\leq& C\varepsilon^2||\nabla \phi||_{L^2(\Omega)}||\nabla u_0||_{L^2(\Omega)}\left(\int_{Y}\left|\sum_{k\in\mathbb{Z}^n}\partial_{y_h}\widehat{\chi}^j_{k}(y)\right|^2dy\right)^{1/2},
\end{aligned}\end{equation*} and H\"{o}lder's inequality and Plancherel's Indetity  give that
\begin{equation*}\begin{aligned}
\int_{Y}\left|\sum_{k\in\mathbb{Z}^n}\partial_{y_h}\widehat{\chi}^j_{k}(y)\right|^2dy&\leq \sum_{k\in\mathbb{Z}^n} |k|^{-2}\int_{Y}\sum_{k\in\mathbb{Z}^n}|k|^2\left|\nabla_y\widehat{\chi}^j_{k}(y)\right|^2dy\\
&\leq C\iint_{Y\times Z}\left|\nabla_z\nabla_y\chi^j(y,z)\right|^2dydz\\
& \leq C,
\end{aligned}\end{equation*} where we use $(2.1)$ in the above inequality. \\

Consequently, according to $(\ref{2.1})$, $(\ref{2.2})$, Lemma 2.7 (i) and (ii) as well as the Sobolev embedding inequality, we can easily have
\begin{equation}\label{3.25}\begin{aligned}
\left|\int_\Omega H_{3,i}\partial_i \phi dx\right|\leq C\varepsilon&\left(||\nabla u_0||_{L^2(\Omega)}+||\nabla^2 u_0||_{L^2(\Omega)}\right)||\nabla
\phi||_{L^2(\Omega)}\\
&+||\nabla u_0||_{L^2(\Omega\setminus\Sigma_{5\varepsilon})}||\nabla \phi||_{L^2(\Omega\setminus\Sigma_{4\varepsilon})}.
\end{aligned}\end{equation} and
\begin{equation}\label{3.26}
\left|\int_\Omega H_{4,i}\partial_i \phi dx\right|\leq C\left(\varepsilon||\nabla u_0||_{L^2(\Omega)}+\varepsilon^2||\nabla^2 u_0||_{L^2(\Omega)}\right)||\nabla \phi||_{L^2(\Omega)},
\end{equation}consequently, combining  the estimates $(\ref{3.23})-(\ref{3.26})$ gives the desired result.
\end{proof}

\begin{rmk}In order to obtain better estimates, if $w_\varepsilon$ has the form
\begin{equation}\label{3.27}\begin{aligned}
w_\varepsilon(x)=&u_\varepsilon(x)-u_0(x)+\varepsilon \chi^j(x/\varepsilon)\psi_{2\varepsilon^\lambda}S_{\varepsilon^\lambda}\left(\partial_{x_j}u_0\right)\\
&+\varepsilon^2\chi^j_y(x/\varepsilon^2)\left[\psi_{2\varepsilon^\lambda}S_{\varepsilon^\lambda}
\left(\partial_{x_j}u_0\right)-\partial_{y_j}\chi^{k}(x/\varepsilon)\psi_{2\varepsilon^\lambda}S_{\varepsilon^\lambda}\left(\partial_{x_k}u_0\right)\right],
\end{aligned}\end{equation}
where $0<\lambda$ is a constant which is to be chosen. In view of Remark 2.8, we need to assume $\lambda\leq 2$. (Actually, in view of the term
$a_{ih}\partial_{y_h}\chi^j_y(x/\varepsilon^2)\partial_{y_j}\chi^{k}(x/\varepsilon)\psi_{2\varepsilon}S_\varepsilon\left(\partial_{x_k}u_0\right)$ in $H_{3,i}$ defined in $(\ref{3.5})$, we need $\lambda \leq 2$.  However, if $\lambda>1$, we need more regularity assumptions on $\chi^k(y)$ and $\chi^k_y(z)$). Consequently, careful computation shows that $\lambda=1$ is the best choice, which may declare that the scale of $\varepsilon$ dominates any other scales. The same
result holds for $w_\varepsilon$ of the form
\begin{equation*}\begin{aligned}
w_\varepsilon(x)=&u_\varepsilon(x)-u_0(x)+\varepsilon \chi^j(x/\varepsilon)\psi_{2\varepsilon^\mu}S_{\varepsilon^\lambda}\left(\partial_{x_j}u_0\right)\\
&+\varepsilon^2\chi^j_y(x/\varepsilon^2)\left[\psi_{2\varepsilon^\mu}S_{\varepsilon^\lambda}
\left(\partial_{x_j}u_0\right)-\partial_{y_j}\chi^{k}(x/\varepsilon)\psi_{2\varepsilon^\mu}S_{\varepsilon^\lambda}\left(\partial_{x_k}u_0\right)\right],
\end{aligned}\end{equation*} where $0<\mu<\lambda\leq 2$.
\end{rmk}

\begin{lemma} Assume the same conditions as in Lemma 3.1, and $u_\varepsilon=u_0$ on $\partial\Omega$.  Then there hold the following estimates
\begin{equation}\label{3.28}
||\nabla w_\varepsilon||_{L^2(\Omega)}\leq C\left(\varepsilon||\nabla u_0||_{L^2(\Omega)}+||\nabla
u_0||_{L^2(\Omega\setminus\Sigma_{5\varepsilon})}+\varepsilon||\nabla^2 u_0||_{L^2(\Omega)}\right)
\end{equation}
and
\begin{equation}\label{3.29} ||u_\varepsilon-u_0||_{L^2(\Omega)}\leq C r_0\left(\varepsilon||\nabla u_0||_{L^2(\Omega)}+||\nabla
u_0||_{L^2(\Omega\setminus\Sigma_{5\varepsilon})}+\varepsilon||\nabla^2 u_0||_{L^2(\Omega)}\right).\end{equation}
where $C$ depends on $\alpha,\beta,M,n$ and the character of $\Omega$, and $r_0=\text{diam}(\Omega)$.
\end{lemma}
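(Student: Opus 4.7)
The plan is to derive \eqref{3.28} by testing the inequality \eqref{3.3} from Lemma 3.1 with $\phi = w_\varepsilon$ itself, and then to obtain \eqref{3.29} by combining \eqref{3.28} with Poincar\'e's inequality and a direct $L^2$ bound on the corrector terms contained in $w_\varepsilon$.

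First I would check that $w_\varepsilon \in H_0^1(\Omega)$. This is exactly the reason for inserting the cutoff $\psi_{2\varepsilon}$: on $\partial\Omega$ we have $u_\varepsilon - u_0 = 0$ by hypothesis, while each of the two corrector terms in \eqref{3.2} carries a factor of $\psi_{2\varepsilon}$, which vanishes outside $\Sigma_\varepsilon$ and in particular in a neighborhood of $\partial\Omega$. So $w_\varepsilon$ is an admissible test function. Using the uniform ellipticity \eqref{1.2} and then applying \eqref{3.3} with $\phi = w_\varepsilon$ gives
\begin{equation*}
\alpha \|\nabla w_\varepsilon\|_{L^2(\Omega)}^2 \le \int_\Omega a_{ij}(x/\varepsilon,x/\varepsilon^2)\partial_j w_\varepsilon \partial_i w_\varepsilon\, dx
\le C\varepsilon(\|\nabla u_0\|_{L^2(\Omega)}+\|\nabla^2 u_0\|_{L^2(\Omega)})\|\nabla w_\varepsilon\|_{L^2(\Omega)} + C\|\nabla u_0\|_{L^2(\Omega\setminus\Sigma_{5\varepsilon})}\|\nabla w_\varepsilon\|_{L^2(\Omega\setminus\Sigma_{4\varepsilon})}.
\end{equation*}
Since $\|\nabla w_\varepsilon\|_{L^2(\Omega\setminus\Sigma_{4\varepsilon})} \le \|\nabla w_\varepsilon\|_{L^2(\Omega)}$, dividing by $\|\nabla w_\varepsilon\|_{L^2(\Omega)}$ yields \eqref{3.28}.

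For the second estimate, I would rewrite
\begin{equation*}
u_\varepsilon - u_0 = w_\varepsilon - \varepsilon\chi^j(x/\varepsilon)\psi_{2\varepsilon}S_\varepsilon(\partial_{x_j}u_0) - \varepsilon^2\chi^j_y(x/\varepsilon^2)\bigl[\psi_{2\varepsilon}S_\varepsilon(\partial_{x_j}u_0) - \partial_{y_j}\chi^k(x/\varepsilon)\psi_{2\varepsilon}S_\varepsilon(\partial_{x_k}u_0)\bigr]
\end{equation*}
and estimate each piece in $L^2(\Omega)$. For the first piece, since $w_\varepsilon \in H_0^1(\Omega)$, Poincar\'e's inequality gives $\|w_\varepsilon\|_{L^2(\Omega)} \le C r_0 \|\nabla w_\varepsilon\|_{L^2(\Omega)}$, and \eqref{3.28} handles the gradient. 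For the corrector pieces, I use that $\chi^j(y)$ and $\chi^k_y(z)$ and $\nabla_y\chi^k$ are all bounded in $L^\infty$ (the first two via Lemma 2.1 together with the Sobolev embedding and De Giorgi--Nash--Moser respectively, and $\nabla_y\chi^k \in L^\infty$ follows from $\chi^k \in W^{2,p}$ with $p>n$). Then Lemma 2.7(i), \eqref{2.21}, and the $L^2$-boundedness of $S_\varepsilon$ give
\begin{equation*}
\|\varepsilon\chi^j(x/\varepsilon)\psi_{2\varepsilon}S_\varepsilon(\partial_{x_j}u_0)\|_{L^2(\Omega)} + \|\varepsilon^2\chi^j_y(x/\varepsilon^2)[\cdots]\|_{L^2(\Omega)} \le C\varepsilon\|\nabla u_0\|_{L^2(\Omega)}.
\end{equation*}
Combining these with \eqref{3.28} and absorbing the $C\varepsilon\|\nabla u_0\|_{L^2(\Omega)}$ term into $Cr_0\,\varepsilon\|\nabla u_0\|_{L^2(\Omega)}$ (possibly adjusting $C$) yields \eqref{3.29}.

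The main obstacle is really only bookkeeping: verifying that $w_\varepsilon$ vanishes on $\partial\Omega$ (which depends crucially on the design of the cutoff $\psi_{2\varepsilon}$) and assembling the $L^\infty$ bounds on the various correctors so that Lemma 2.7(i) and \eqref{2.21} can be applied to the $\chi^j(x/\varepsilon)$ and $\chi^k_y(x/\varepsilon^2)$ factors at different scales. No new analytic difficulty appears; the heavy work has already been done in Lemma 3.1 and the preliminary estimates of Section 2.
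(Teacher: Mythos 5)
Your proposal is correct, and for \eqref{3.28} it coincides with the paper: verify $w_\varepsilon\in H^1_0(\Omega)$ (the cutoff $\psi_{2\varepsilon}$ plus $u_\varepsilon=u_0$ on $\partial\Omega$), take $\phi=w_\varepsilon$ in Lemma 3.1, and use ellipticity. For \eqref{3.29} your skeleton is also the paper's (write $u_\varepsilon-u_0=w_\varepsilon-\text{correctors}$, apply Poincar\'e to $w_\varepsilon$, bound the corrector terms in $L^2$ by $C\varepsilon\|\nabla u_0\|_{L^2}$), but you handle the corrector terms differently: the paper stays with its Fourier-series device, expanding the two-scale factor $\chi^j(y,z)$ (and its derivatives) in Fourier series in one variable so that the coefficients are periodic in the other, and then using Plancherel together with \eqref{2.1} and Lemma 2.7(i); you instead invoke the uniform $L^\infty$ bounds $\|\chi^j_y\|_{L^\infty(Z)}\le C$ (De Giorgi--Nash--Moser, stated in the proof of Lemma 2.1), $\chi^j,\nabla_y\chi^j\in L^\infty(Y)$ (from $\chi^j\in W^{2,p}(Y)$, $p>n$, and Sobolev embedding), which makes the two-scale factor harmless pointwise and reduces everything to the $L^2$-boundedness of $S_\varepsilon$. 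This is legitimate and in fact more elementary, since those sup-norm bounds are already established in Section 2; note only that \eqref{2.21} as stated does not literally apply to $\chi^j(x/\varepsilon,x/\varepsilon^2)$ (it is not a fixed periodic function of a single variable), so the $L^\infty$ bound is what actually carries your estimate, and your citation of Lemma 2.7(i)/\eqref{2.21} there is redundant rather than wrong. Your final absorption of $C\varepsilon\|\nabla u_0\|_{L^2}$ into the factor $Cr_0$ uses that $C$ may depend on $\Omega$ (as the lemma allows); the paper's own proof has exactly the same feature, and in the later application to balls of radius $r$ (Theorem 3.5) the loss is compensated by $\varepsilon\le(\varepsilon/r)^{1/2}r$, so this is consistent with how the lemma is used.
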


\begin{proof}
Due to $(\ref{3.1})$ and $u_\varepsilon=u_0$ on $\partial\Omega$,  $w_\varepsilon\in H^1_0(\Omega)$ is easy to verify. Then,  taking $\phi=w_\varepsilon$
in the Lemma 3.1, it gives the estimate $(\ref{3.28})$. Imitating the computation in Lemma 3.1 after in view of the definition $(\ref{3.2})$ of
$w_\varepsilon$
(actually, when estimating the term $||\nabla_y\chi^j(x/\varepsilon,x/\varepsilon^2)\psi_{2\varepsilon}S_\varepsilon(\partial_ju_0)||_{L^2(\Omega)}$, we take the Fourier transform of $\nabla_y\chi^j(y,z)$ with respect to $z$; and when estimating the term
$||\nabla_z\chi^j(x/\varepsilon,x/\varepsilon^2)\psi_{2\varepsilon}S_\varepsilon(\partial_ju_0)||_{L^2(\Omega)}$, we take the Fourier transform of
$\nabla_z\chi^j(y,z)$ with respect to $y$, then Plancherel's Indetity and $(2.1)$ will lead to the desired estimates),  then the Poincar\'{e}' inequality gives the desired estimate $(\ref{3.29})$.
\end{proof}

\begin{thm}Let $B_r=B(0,r)\subset \mathbb{R}^n$ be a ball with $r\in (20\varepsilon,1]$. Assume that $\mathcal{L}_\varepsilon$ satisfies the conditions $(\ref{1.2})$, $(\ref{1.3})$ and $(\ref{1.4})$. If $f\in L^2(B_r),$ and $g\in H^{3/2}(\partial B_r)$, let $u_\varepsilon,u_0\in H^1(B_r)$ be the weak solutions of $\mathcal{L}_\varepsilon u_\varepsilon=f$ and $\mathcal{L}_0 u_0=f$ in $B_r$, respectively,  with $u_\varepsilon=u_0=g$ on $\partial B_r$. Then there holds the following convergence rate estimates:
\begin{equation}\label{3.30}
||u_\varepsilon-u_0||_{L^2(B_r)}\leq C\left(\frac{\varepsilon}{r}\right)^{\frac{1}{2}}\{r||g||_{\dot{H}^{1/2}(\partial B_r)}+r^2||g||_{\dot{H}^{3/2}(\partial B_r)}+r^2||f||_{L^2(B_r)}\},
\end{equation} where $C$ depends on $\alpha,\beta,M$ and $n$.\end{thm}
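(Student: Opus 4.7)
The plan is to invoke Lemma 3.4 on $\Omega = B_r$, which reduces the theorem to estimating three quantities involving $u_0$, and then to produce the sharp factor $(\varepsilon/r)^{1/2}$ via a boundary-layer estimate combined with $H^1/H^2$ regularity of the homogenized equation.

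Since $\mathcal{L}_\varepsilon u_\varepsilon = \mathcal{L}_0 u_0 = f$ in $B_r$ and $u_\varepsilon = u_0 = g$ on $\partial B_r$, Lemma 3.4 applies directly with $r_0 = 2r$, giving
\begin{equation*}
||u_\varepsilon - u_0||_{L^2(B_r)} \leq Cr\bigl(\varepsilon ||\nabla u_0||_{L^2(B_r)} + ||\nabla u_0||_{L^2(B_r\setminus \Sigma_{5\varepsilon})} + \varepsilon ||\nabla^2 u_0||_{L^2(B_r)}\bigr).
\end{equation*}
The constant depends only on $\alpha,\beta,M,n$, because for a ball the only $\Omega$-dependent ingredient in the proof of Lemma 3.4 enters through the Poincar\'e constant, which is scale-invariant modulo the explicit factor $r_0$. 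Next I apply standard elliptic regularity to $\mathcal{L}_0 u_0 = f$ in $B_r$, $u_0 = g$ on $\partial B_r$. Since $\mathcal{L}_0$ has constant coefficients, the rescaling $v_0(x) = u_0(rx)$ cleanly reduces estimates to $B_1$; tracking the $r$-powers yields
\begin{equation*}
||\nabla u_0||_{L^2(B_r)} \leq C\bigl(||g||_{\dot{H}^{1/2}(\partial B_r)} + r||f||_{L^2(B_r)}\bigr),
\end{equation*}
\begin{equation*}
||\nabla^2 u_0||_{L^2(B_r)} \leq C\bigl(r^{-1}||g||_{\dot{H}^{1/2}(\partial B_r)} + ||g||_{\dot{H}^{3/2}(\partial B_r)} + ||f||_{L^2(B_r)}\bigr).
\end{equation*}

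The heart of the argument is the boundary-layer bound on $||\nabla u_0||_{L^2(B_r\setminus \Sigma_{5\varepsilon})}$, which will supply the $(\varepsilon/r)^{1/2}$ decay. I use the elementary layer lemma: for every $v \in H^1(B_r)$ and $0 < t < r$,
\begin{equation*}
||v||_{L^2(B_r\setminus \Sigma_t)}^2 \leq Ct\, ||v||_{L^2(\partial B_r)}^2 + Ct^2 ||\nabla v||_{L^2(B_r)}^2,
\end{equation*}
which is obtained by integrating $|v|^2$ along inward normals. Applied with $v = \nabla u_0$ and $t = 5\varepsilon$, and combined with the scale-invariant trace/interpolation inequality $||\nabla u_0||_{L^2(\partial B_r)} \leq C\bigl(r^{-1/2}||\nabla u_0||_{L^2(B_r)} + r^{1/2}||\nabla^2 u_0||_{L^2(B_r)}\bigr)$, and using $\varepsilon \leq r/20$ to absorb $\varepsilon||\nabla^2 u_0||$ into $(\varepsilon/r)^{1/2} \cdot r||\nabla^2 u_0||$, this yields
\begin{equation*}
||\nabla u_0||_{L^2(B_r\setminus \Sigma_{5\varepsilon})} \leq C(\varepsilon/r)^{1/2}\bigl(||\nabla u_0||_{L^2(B_r)} + r||\nabla^2 u_0||_{L^2(B_r)}\bigr).
\end{equation*}

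Finally, substituting the $H^1$ and $H^2$ bounds into all three terms on the right of the Lemma 3.4 inequality, and using the elementary estimates $\varepsilon \leq (\varepsilon/r)^{1/2}\cdot r$ (because $\varepsilon \leq r$) and $r \leq 1$, each of $r\varepsilon||\nabla u_0||_{L^2(B_r)}$, $r\varepsilon||\nabla^2 u_0||_{L^2(B_r)}$, and $r||\nabla u_0||_{L^2(B_r\setminus \Sigma_{5\varepsilon})}$ is dominated by $C(\varepsilon/r)^{1/2}\bigl(r||g||_{\dot{H}^{1/2}(\partial B_r)} + r^2||g||_{\dot{H}^{3/2}(\partial B_r)} + r^2||f||_{L^2(B_r)}\bigr)$, yielding the stated bound. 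The main obstacle is the boundary-layer step: extracting precisely the $(\varepsilon/r)^{1/2}$ decay requires the layer lemma and the trace inequality to be combined in a scale-invariant way; the remaining work is careful bookkeeping of powers of $r$.
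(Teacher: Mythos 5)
Your proposal is correct and follows essentially the same route as the paper: invoke Lemma 3.4 on $B_r$, control the boundary-layer term $\|\nabla u_0\|_{L^2(B_r\setminus B_{r-5\varepsilon})}$ by a thin-shell/trace argument yielding the factor $(\varepsilon/r)^{1/2}$, use $\varepsilon\leq(\varepsilon/r)^{1/2}r$, and conclude with the scaled $H^2$ estimate for the constant-coefficient homogenized problem. The only differences are cosmetic (layer lemma plus trace on $\partial B_r$ versus the paper's co-area formula with traces on interior spheres, and tracking the $H^1$/$H^2$ bounds separately), so no further comment is needed.
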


\begin{proof}According to $(\ref{3.29})$, it gives  $$||u_\varepsilon-u_0||_{L^2(B_r)}\leq C r\{||\nabla u_0||_{L^2(B_r\setminus B_{r-5\varepsilon})}+\varepsilon||\nabla u_0||_{L^2(B_{r})}+\varepsilon||\nabla^2 u_0||_{L^2(B_{r})},$$
and the trace theorem  gives
\begin{equation}\label{3.31}\begin{aligned}
||\nabla u_0||^2_{L^2(B_r\setminus B_{r-5\varepsilon})}&=\int_0^{5\varepsilon}\int_{\partial B(0,r')}|\nabla u_0|^2dS_{r'}dr'\leq 5\varepsilon \sup_{r'\in[3r/4,r]}\int_{\partial B(0,r')}|\nabla u_0|^2dS_{r'}\\ &\leq C\varepsilon/r ||\nabla u_0||^2_{L^2(B_r)}+C\varepsilon r ||\nabla^2 u_0||^2_{L^2(B_r)}
\end{aligned}\end{equation}
Observing that $\varepsilon\leq(\frac{\varepsilon}{r})^{1/2}r\leq(\frac{\varepsilon}{r})^{1/2}$,  then
$$\begin{aligned}
||u_\varepsilon-u_0||_{L^2(B_r)}&\leq C r\left\{||\nabla u_0||_{L^2(B_r\setminus B_{r-5\varepsilon})}+\varepsilon||\nabla u_0||_{L^2(B_{r})}+\varepsilon||\nabla^2 u_0||_{L^2(B_{r})}\right\}\\
&\leq C r\left\{\left(\frac{\varepsilon}{r}\right)^{1/2}[ ||\nabla u_0||_{L^2(B_r)}+ r ||\nabla^2 u_0||_{L^2(B_r)}]+\varepsilon||\nabla^2 u_0||_{L^2(B_r)}\right\}\\
&\leq C r\left(\frac{\varepsilon}{r}\right)^{1/2}(||\nabla u_0||_{L^2(B_r)}+ r ||\nabla^2 u_0||_{L^2(B_r)})\\
&\leq C\left (\frac{\varepsilon}{r}\right)^{\frac{1}{2}}(r||g||_{\dot{H}^{1/2}(\partial B_r)}+r^2||g||_{\dot{H}^{3/2}(\partial B_r)}+r^2||f||_{L^2(B_r)}).
\end{aligned}$$
where we use the $H^2$ estimate of Laplace equation in the last inequality and thus complete the proof.
\end{proof}
Actually, we have obtained the following result.
\begin{thm} (convergence rates). Let $\Omega\subset \mathbb{R}^n$ be a bounded $C^{1,1}$ domain. Assume that $\mathcal{L}_\varepsilon$ satisfies the conditions $(\ref{1.2}),\ (\ref{1.3})$ and $(\ref{1.4})$. If $f\in L^2(\Omega),$ and $g\in H^{3/2}(\partial \Omega)$, let $u_\varepsilon,u_0\in H^1(\Omega)$ be the weak solutions of $(\ref{1.1})$ and $(\ref{1.5})$,  respectively, then there holds the following estimates \begin{equation*}
||u_\varepsilon-u_0||_{L^2(\Omega)}\leq C\varepsilon^{1/2}(||g||_{H^{3/2}(\partial \Omega)}+||f||_{L^2(\Omega)}),
\end{equation*} where $C$ depends on $\alpha,\beta,M,n$ and $\Omega$.\end{thm}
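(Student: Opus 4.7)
The plan is to combine the near-boundary estimate obtained in Lemma 3.4 with a standard layer/tubular-neighborhood estimate and $H^2$-regularity for the homogenized problem. First I would verify the hypothesis of Lemma 3.4: since $u_\varepsilon - u_0 \in H^1_0(\Omega)$ (both take the same Dirichlet data $g$) and $\mathcal{L}_\varepsilon u_\varepsilon = f = \mathcal{L}_0 u_0$ in $\Omega$, the lemma applies directly and yields
\begin{equation*}
\|u_\varepsilon - u_0\|_{L^2(\Omega)} \leq C\,r_0 \left( \varepsilon \|\nabla u_0\|_{L^2(\Omega)} + \|\nabla u_0\|_{L^2(\Omega\setminus \Sigma_{5\varepsilon})} + \varepsilon \|\nabla^2 u_0\|_{L^2(\Omega)} \right).
\end{equation*}

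Next, I would treat the boundary-layer term $\|\nabla u_0\|_{L^2(\Omega\setminus \Sigma_{5\varepsilon})}$, which is the main (and only nontrivial) obstacle in passing from Lemma 3.4 to the theorem. Because $\Omega$ is $C^{1,1}$, the distance-to-boundary function $d(x)=\mathrm{dist}(x,\partial\Omega)$ is Lipschitz with $|\nabla d|=1$ a.e., and the layer $\Omega\setminus\Sigma_{5\varepsilon}=\{0<d(x)<5\varepsilon\}$ can be sliced via the co-area formula:
\begin{equation*}
\|\nabla u_0\|^2_{L^2(\Omega\setminus \Sigma_{5\varepsilon})} = \int_0^{5\varepsilon} \int_{\{d(x)=t\}} |\nabla u_0|^2\, dS_t\, dt.
\end{equation*}
Since $u_0\in H^2(\Omega)$, each level surface $\{d=t\}$ is a $C^{1,1}$ hypersurface diffeomorphic (with bounded Jacobians uniform in $t\in[0,5\varepsilon]$) to $\partial\Omega$, so the trace inequality gives $\|\nabla u_0\|_{L^2(\{d=t\})}\leq C\|u_0\|_{H^2(\Omega)}$ uniformly in $t$. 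Integrating in $t$ yields the sharp layer bound
\begin{equation*}
\|\nabla u_0\|_{L^2(\Omega\setminus \Sigma_{5\varepsilon})} \leq C\,\varepsilon^{1/2} \|u_0\|_{H^2(\Omega)}.
\end{equation*}

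Finally, I would invoke $H^2$-regularity for the homogenized equation $(\ref{1.5})$. Because $\widehat{A}$ is a constant matrix satisfying the uniform ellipticity condition (as noted just after its definition in $(\ref{1.6})$), and $\Omega$ is $C^{1,1}$ with $f\in L^2(\Omega)$ and $g\in H^{3/2}(\partial\Omega)$, classical elliptic regularity gives
\begin{equation*}
\|u_0\|_{H^2(\Omega)} \leq C\bigl(\|f\|_{L^2(\Omega)} + \|g\|_{H^{3/2}(\partial\Omega)}\bigr).
\end{equation*}
Substituting this into Lemma 3.4, using the layer estimate above, and noting that $\varepsilon \leq \varepsilon^{1/2}$ when $\varepsilon<1$ (while for $\varepsilon \geq 1$ the bound is trivial by coercivity), we conclude
\begin{equation*}
\|u_\varepsilon - u_0\|_{L^2(\Omega)} \leq C\,\varepsilon^{1/2}\bigl(\|f\|_{L^2(\Omega)} + \|g\|_{H^{3/2}(\partial\Omega)}\bigr),
\end{equation*}
which is the asserted bound. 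The only essential difficulty is the layer estimate on $\Omega\setminus\Sigma_{5\varepsilon}$; everything else is a direct assembly of Lemma 3.4 and standard constant-coefficient $H^2$-regularity, so the proof should be quite short.
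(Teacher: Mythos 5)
Your proposal is correct and follows essentially the same route the paper intends: it is the argument of Theorem 3.4 (Lemma 3.4 combined with a co-area/trace estimate on the $O(\varepsilon)$ boundary layer and $H^2$ regularity for the constant-coefficient homogenized problem) transplanted from the ball to a general $C^{1,1}$ domain, which is exactly how the paper arrives at Theorem 3.5. Your explicit treatment of the layer $\Omega\setminus\Sigma_{5\varepsilon}$ via level sets of the distance function just fills in the detail the paper leaves implicit when it passes from $B_r$ to $\Omega$.
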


\section{Proof of Theorem 1.2}

In this section, we study the convergence rates in $L^2$ and give the proof of Theorem 1.2. (Actually, We follow the proof in \cite[Chapter 2.4]{shen2018periodic}).
We firstly consider the special case when $\Omega=B_r$ is a ball with radial $r\in (20\varepsilon,1]$, and want to identify the constants in what way depending on $r$. Let $A^*$ denote the adjoint of $A$; i.e., $A^*=(a_{ij}^*(y,z))=(a_{ji}(y,z))$. For $G\in L^2(B_r)$, let $v_\varepsilon$ be the weak solution to
\begin{equation}\label{4.1}
\left\{
\begin{aligned}
\mathcal{L}^*_{\varepsilon} v_{\varepsilon} \equiv-\frac{\partial}{\partial x_{i}}\left(a^*_{i j}\left(\frac{x}{\varepsilon}, \frac{x}{\varepsilon^{2}}\right) \frac{\partial v_\varepsilon}{\partial x_{j}}\right)&=G  \text { in } B_r \\
v_{\varepsilon}&=0  \text { on } \partial B_r,
\end{aligned}\right.
\end{equation}
and $\chi^{j,*}(y)$, $\chi_y^{j,*}(z)$ be the correctors, respectively. The homogenized equation is given by
\begin{equation}\label{4.2}
\left\{\begin{aligned}
\mathcal{L}_{0} v_0 \equiv-\operatorname{div}\left(\widehat{A^*}\nabla v_0\right) &=G  \text { in } B_r \\
v_{0}&=0  \text { on } \partial B_r,
\end{aligned}\right.
\end{equation} and $\widehat{A^*}$ satisfies the similar equality as $\widehat{A}$. Due to the $H^2$ estimate of $v_0$, we have
\begin{equation}\label{4.3}||\nabla v_0||_{L^2(B_r)}\leq Cr||G||_{L^2(B_r)},\end{equation}
and \begin{equation}\label{4.4}||\nabla^2 v_0||_{L^2(B_r)}\leq C||G||_{L^2(B_r)},\end{equation} where the constant $C$ depends on $\alpha,\beta,$ and $n$.
Let
\begin{equation}\label{4.5}\begin{aligned}
R_\varepsilon(x)=&v_\varepsilon(x)-v_0(x)+\varepsilon \chi^{j,*}(x/\varepsilon)\psi_{2\varepsilon}S_\varepsilon\left(\partial_{x_j}v_0\right)\\
&+\varepsilon^2\chi^{j,*}_y(x/\varepsilon^2)\left[\psi_{2\varepsilon}S_\varepsilon\left(\partial_{x_j}v_0\right)
-\partial_{y_j}\chi^{k,*}(x/\varepsilon)\psi_{2\varepsilon}S_\varepsilon\left(\partial_{x_k}v_0\right)\right],
\end{aligned}\end{equation} where $\chi^{k,*}_y(x/\varepsilon^2)=\chi^{k,*}(x/\varepsilon,x/\varepsilon^2)$.
By the trace theorem, $(\ref{4.3})$ and $(\ref{4.4})$, we have
\begin{equation}\label{4.6}\begin{aligned}
||\nabla v_0||_{L^2(B_r\setminus B_{r-5\varepsilon})}\leq& C\left(\frac{\varepsilon}{r}\right)^{1/2}(||\nabla v_0||_{L^2(B_r)}
+r||\nabla^2 v_0||_{L^2(B_r)})\\ \leq& C (\varepsilon r)^{1/2}||G||_{L^2(B_r)},
\end{aligned}\end{equation} where $C$ depends on $\alpha,\beta,M$ and $n$.
In view of Lemma 3.4 and noting that $\varepsilon\leq r<1$,we have the following estimate
\begin{equation}\label{4.7}\begin{aligned}
||\nabla R_\varepsilon||_{L^2(B_r)}\leq& C\left(\varepsilon||\nabla v_0||_{L^2(B_r)}+||\nabla
v_0||_{L^2(B_r\setminus B_{r-5\varepsilon})}+\varepsilon||\nabla^2 v_0||_{L^2(B_r)}\right)\\
\leq &C\left(\frac{\varepsilon}{r}\right)^{1/2}(||\nabla v_0||_{L^2(B_r)}+r||\nabla^2 v_0||_{L^2(B_r)})\\
\leq& C (\varepsilon r)^{1/2}||G||_{L^2(B_r)}.
\end{aligned}\end{equation}

Observe that \begin{equation}\label{4.8}
\left|\int_{B_r} w_{\varepsilon} \cdot G dx\right|=\left|\int_{B_r} a_{ij} \partial_j w_{\varepsilon}\partial_i v_{\varepsilon} dx\right|\leq J_{1}+J_{2}+J_{3}+J_4,
\end{equation} where
\begin{equation}\label{4.9}\begin{aligned}
J_1&+J_2=\left|\int_{B_r} a_{ij} \partial_j w_{\varepsilon}\partial_i R_{\varepsilon} dx\right|+\left|\int_{B_r} a_{ij} \partial_j w_{\varepsilon}\partial_i v_{0} dx\right| \\
J_3&=\left|\int_{B_r} a_{ij} \partial_j w_{\varepsilon}\partial_i\left\{\varepsilon
\chi^{j,*}(x/\varepsilon)\psi_{2\varepsilon}S_\varepsilon\left(\partial_{x_j}v_0\right)\right\}dx\right|=:\left|\int_{B_r} a_{ij} \partial_j w_{\varepsilon}\partial_iv_1dx\right| \\
J_4&=\left|\int_{B_r} a_{ij} \partial_j w_{\varepsilon}\partial_i\left\{\varepsilon^2\chi^{j,*}_y(x/\varepsilon^2)\left[\psi_{2\varepsilon}S_\varepsilon\left(\partial_{x_j}v_0\right)-\partial_{y_j}\chi^{k,*}(x/\varepsilon)\psi_{2\varepsilon}S_\varepsilon\left(\partial_{x_k}v_0\right)\right]
\right\}dx\right|\\
&=:\left|\int_{B_r} a_{ij} \partial_j w_{\varepsilon}\partial_iv_2dx\right|
 \end{aligned}
\end{equation}

According to $(\ref{3.28})$, $(\ref{3.31})$ and $(\ref{4.7})$,we have
\begin{equation}\label{4.10}
J_1\leq C ||\nabla w_\varepsilon||_{L^2(B_r)}||\nabla R_\varepsilon||_{L^2(B_r)}\leq C \varepsilon||G||_{L^2(B_r)}(||\nabla u_0||_{L^2(B_r)}+r||\nabla^2 u_0||_{L^2(B_r)}).
\end{equation}\\
Noting that $v_1,v_2\in H^1_0(\Omega)$ defined in $(\ref{4.9})$, then according to $(\ref{3.3})$, $(\ref{3.31})$, $(\ref{4.3})$, (\ref{4.4}) and (\ref{4.6}),  it gives
\begin{equation}\label{4.11}\begin{aligned}
J_2&\leq C\varepsilon(||\nabla u_0||_{L^2(B_r)}+||\nabla^2 u_0||_{L^2(B_r)})||\nabla v_0||_{L^2(B_r)}\\
&\quad \ +C||\nabla u_0||_{L^2(B_r\setminus B_{r-5\varepsilon})}||\nabla v_0||_{L^2(B_r\setminus B_{r-4\varepsilon})}\\
&\leq C\varepsilon (||\nabla u_0||_{L^2(B_r)}+r||\nabla^2 u_0||_{L^2(B_r)})||G||_{L^2(B_r)};
\end{aligned}\end{equation}\\
According to Lemma 2.7 (i), we can easily have
\begin{equation*}\begin{aligned}
||\nabla v_1||_{L^2(B_r)}\leq& ||\nabla_y\chi^{j,*}(x/\varepsilon)\psi_{2\varepsilon}S_\varepsilon\left(\partial_{x_j}v_0\right)||_{L^2(B_r)}+
\varepsilon||\chi^{j,*}(x/\varepsilon)\nabla(\psi_{2\varepsilon}S_\varepsilon\left(\partial_{x_j}v_0\right))||_{L^2(B_r)}\\
\leq& C||\nabla v_0||_{L^2(B_r)},\end{aligned}\end{equation*}
 and similarly, \begin{equation*}
||\nabla v_1||_{L^2(B_{r}\setminus B_{r-4\varepsilon})}\leq C||\nabla v_0||_{L^2(B_{r}\setminus B_{r-4\varepsilon})}\leq C (\varepsilon r)^{1/2}||G||_{L^2(B_r)},
\end{equation*}
then by Lemma 3.1, it gives
\begin{equation}\label{4.12}\begin{aligned}
J_3&\leq C\varepsilon(||\nabla u_0||_{L^2(B_r)}+||\nabla^2 u_0||_{L^2(B_r)})||\nabla v_1||_{L^2(B_r)}\\
&\quad \ +C||\nabla u_0||_{L^2(B_r\setminus B_{r-5\varepsilon})}||\nabla v_1||_{L^2(B_r\setminus B_{r-4\varepsilon})}\\
&\leq C\varepsilon (||\nabla u_0||_{L^2(B_r)}+r||\nabla^2 u_0||_{L^2(B_r)})||G||_{L^2(B_r)};
\end{aligned}\end{equation}
Similar for $J_4$, we have
\begin{equation}\label{4.13}J_4\leq C\varepsilon (||\nabla u_0||_{L^2(B_r)}+r||\nabla^2 u_0||_{L^2(B_r)})||G||_{L^2(B_r)}. \end{equation}
Consequently, combining $(\ref{4.10})-(\ref{4.13})$ leads to
\begin{equation}\label{4.14}\left|\int_{B_r} w_{\varepsilon} \cdot G dx\right|\leq C\varepsilon (||\nabla u_0||_{L^2(B_r)}+r||\nabla^2 u_0||_{L^2(B_r)})||G||_{L^2(B_r)},\end{equation}
then \begin{equation}\label{4.15}||w_\varepsilon||_{L^2(B_r)}\leq C\varepsilon (||\nabla u_0||_{L^2(B_r)}+r||\nabla^2 u_0||_{L^2(B_r)}),\end{equation}
In view of $(\ref{3.2})$, it finally gives that
\begin{equation}\label{4.16}||u_\varepsilon-u_0||_{L^2(B_r)}\leq C\varepsilon (||\nabla u_0||_{L^2(B_r)}+r||\nabla^2 u_0||_{L^2(B_r)}),\end{equation}
which completes the proof of Theorem 1.2 when $\Omega$ is a ball. Actually, the Theorem 1.2 still holds when $\Omega$ is a bounded $C^{1,1}$ domain.

\section{Interior Lipschitz estimates at large scale}
Firstly, we introduce the following approximate result, which shows that there exists a good function $w$ approximates $u_\varepsilon$ at a large scale.
\begin{lemma}
Let $\varepsilon<r<1/2.$ Assume that $\mathcal{L}_\varepsilon$ satisfies the conditions $(\ref{1.2})$, $(\ref{1.3})$ and $(\ref{1.4})$. And let $u_\varepsilon\in H^1(B(0,2r))$ be a weak solution of $\mathcal{L}_\varepsilon u_\varepsilon=f$ in $B(0,2r)$. Then there exists $w\in H^1(B(0,r))$ such that $$\mathcal{L}_0w=f,$$ and there holds
\begin{equation}\label{5.1}
\left(\fint_{B(0,r)}|u_\varepsilon-w|^2\right)^{1/2}\leq C\left(\frac{\varepsilon}{r}\right)^{1/4}\left\{\left(\fint_{B(0,2r)}|u_\varepsilon|^2\right)^{1/2}+r^2\left(\fint_{B(0,2r)}|f|^2\right)^{1/2}\right\},
\end{equation} where $C$ depends on $\alpha,\beta,M,$ and $n$.
\end{lemma}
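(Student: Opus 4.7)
The plan is to construct $w$ as the solution of the homogenized Dirichlet problem on a ball slightly larger than $B(0,r)$, with boundary data coming from $u_\varepsilon$, and then to invoke the convergence rate of Theorem 3.5 after choosing the radius via a Fubini-type averaging.

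I would begin with the Caccioppoli inequality applied to $\mathcal{L}_\varepsilon u_\varepsilon = f$ on $B(0,2r)$, which yields
\begin{equation*}
\int_{B(0,3r/2)}|\nabla u_\varepsilon|^2\,dx \leq \frac{C}{r^2}\int_{B(0,2r)}|u_\varepsilon|^2\,dx + Cr^2\int_{B(0,2r)}|f|^2\,dx.
\end{equation*}
Then, by Fubini's theorem on the annulus $B(0,3r/2)\setminus B(0,r)$, I would select a radius $t\in[r,3r/2]$ such that the $L^2$-traces of $u_\varepsilon$ and $\nabla u_\varepsilon$ on $\partial B(0,t)$ satisfy
\begin{equation*}
\int_{\partial B(0,t)}\bigl(|u_\varepsilon|^2+r^2|\nabla u_\varepsilon|^2\bigr)\,dS \leq \frac{C}{r}\int_{B(0,3r/2)}\bigl(|u_\varepsilon|^2+r^2|\nabla u_\varepsilon|^2\bigr)\,dx.
\end{equation*}
I then define $w\in H^1(B(0,t))$ as the weak solution of $\mathcal{L}_0 w = f$ in $B(0,t)$ with $w = u_\varepsilon$ on $\partial B(0,t)$; its restriction to $B(0,r)\subset B(0,t)$ satisfies $\mathcal{L}_0 w = f$ in $B(0,r)$, as required.

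Theorem 3.5, applied on $B(0,t)$ with Dirichlet data $g = u_\varepsilon|_{\partial B(0,t)}$, then yields
\begin{equation*}
\|u_\varepsilon-w\|_{L^2(B(0,t))} \leq C\Bigl(\frac{\varepsilon}{t}\Bigr)^{1/2}\bigl\{t\|g\|_{\dot H^{1/2}(\partial B(0,t))} + t^2\|g\|_{\dot H^{3/2}(\partial B(0,t))} + t^2\|f\|_{L^2(B(0,t))}\bigr\}.
\end{equation*}
The $\dot H^{1/2}$ boundary norm is controlled by the trace theorem together with the averaging choice of $t$, and the $L^2$ term involving $f$ is already of the right form after converting integrals to averages.

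The main obstacle, and the source of the loss from the $(\varepsilon/r)^{1/2}$ rate of Theorem 3.5 to the claimed $(\varepsilon/r)^{1/4}$ rate, is the $\dot H^{3/2}$ boundary norm of $u_\varepsilon$. Since the oscillating coefficients of $\mathcal{L}_\varepsilon$ preclude $H^2$ interior regularity for $u_\varepsilon$, its trace $g$ lies at best in $H^1(\partial B(0,t))$ under the averaging choice of $t$, giving $H^{1/2}$ rather than $H^{3/2}$ control. I plan to bridge this gap by an interpolation argument: splitting $\|g\|_{\dot H^{3/2}(\partial B(0,t))} \lesssim \varepsilon^{-1/2}\|g\|_{\dot H^{1}(\partial B(0,t))}$ via an $\varepsilon$-smoothing on the sphere produces an extra factor of $\varepsilon^{-1/2}$, which when combined with the $(\varepsilon/t)^{1/2}$ prefactor yields the exponent $1/4$. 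Normalizing the remaining integrals to averaged norms on $B(0,r)$ and $B(0,2r)$ and absorbing constants then gives the stated estimate. An alternative route is to invoke Lemma 3.4 directly with $u_0 = w$ and bound $\|\nabla^2 w\|_{L^2(B(0,t))}$ via constant-coefficient elliptic regularity for $\mathcal{L}_0$ applied to the $H^1$ boundary datum, which produces the same loss of half an exponent.
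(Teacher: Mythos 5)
Your overall architecture (Caccioppoli plus a Fubini choice of radius, comparison with a homogenized Dirichlet problem on $B(0,t)$, Theorem 3.5, and a smoothing of the boundary data to cope with the $\dot H^{3/2}$ norm) is the same as the paper's, but the step where you ``bridge the gap'' contains a genuine error. The inequality $\|g\|_{\dot H^{3/2}(\partial B(0,t))}\lesssim \varepsilon^{-1/2}\|g\|_{\dot H^{1}(\partial B(0,t))}$ is false for $g=u_\varepsilon|_{\partial B(0,t)}$ itself: an inverse estimate of this type only holds for the \emph{smoothed} datum $S_\delta g$ (cf.\ (\ref{2.23}) and (\ref{5.3})). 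Once you smooth, the boundary data of the comparison problems change, and your $w$ (defined with data $g$) is no longer the function to which Theorem 3.5 applies. You must then estimate the effect of replacing $g$ by its smoothing in \emph{both} the $\varepsilon$-problem and the homogenized problem; this is exactly what the paper does by solving three auxiliary problems ($\mathcal{L}_\varepsilon v_\varepsilon=f$ and $\mathcal{L}_0 w=f$ with the smoothed datum $(u_\varepsilon)_\delta$, and $\Delta z_\varepsilon=0$ with datum $u_\varepsilon-(u_\varepsilon)_\delta$), and by controlling $u_\varepsilon-v_\varepsilon$ through an energy estimate against $z_\varepsilon$ and controlling $z_\varepsilon$ through nontangential maximal function estimates. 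These replacement errors, of size $(\delta/r)^{1/2}$ and $\delta/r$, are entirely absent from your argument. The same objection applies to your ``alternative route'': with only $H^1(\partial B(0,t))$ data you cannot bound $\|\nabla^2 w\|_{L^2}$ by constant-coefficient regularity, since that requires $H^{3/2}$ boundary data.

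There is also a quantitative error in your choice of smoothing scale. If you smooth at scale $\varepsilon$, the $\dot H^{3/2}$ contribution is
$(\varepsilon/r)^{1/2}\, r^2\, \varepsilon^{-1/2}\,\|g\|_{\dot H^1(\partial B(0,t))}$, and since the averaging choice of $t$ gives $\|g\|_{\dot H^1(\partial B(0,t))}\lesssim r^{-3/2}\bigl(\|u_\varepsilon\|_{L^2(B(0,2r))}+r^2\|f\|_{L^2(B(0,2r))}\bigr)$, the prefactors cancel exactly and you obtain a bound of order $O(1)$, not $(\varepsilon/r)^{1/4}$: the gain from Theorem 3.5 is eaten completely by the inverse estimate. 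The exponent $1/4$ only appears after optimizing the smoothing scale $\delta$ against the replacement errors: the smoothed comparison contributes $(\varepsilon/\delta)^{1/2}$, the data-replacement terms contribute $(\delta/r)^{1/2}+\delta/r$, and the balance $\delta=(\varepsilon r)^{1/2}$ (admissible since $\varepsilon\le r$) yields $(\varepsilon/r)^{1/4}$, as in (\ref{5.7})--(\ref{5.9}). As written, your argument neither produces the replacement terms nor chooses $\delta$ correctly, so it does not reach the stated estimate.
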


\begin{proof} Let $B_r\triangleq B(0,r)$. Due to the Caccioppoli's inequality and the co-area formula, there exists $r_0\in[r,3r/2]$ such that
\begin{equation}\label{5.2}
\int_{\partial B(0,r_0)}|\nabla u_\varepsilon|^2dS\leq\frac{C}{r^3}\int_{B(0,2r)}|u_\varepsilon|^2dx+Cr\int_{B(0,2r)}|f|^2dx.
\end{equation}
Let $\delta\leq r$, consider the auxiliary equations, $\mathcal{L}_\varepsilon v_\varepsilon=f$ in $B(0,r_0)$ with $v_\varepsilon=(u_\varepsilon)_\delta$ on $\partial B(0,r_0)$; $\mathcal{L}_0 w=f$ in $B(0,r_0)$ with $w=(u_\varepsilon)_\delta$ on $\partial B(0,r_0)$; and
$\Delta z_\varepsilon=0$ in $B(0,r_0)$ with $z_\varepsilon=u_\varepsilon-(u_\varepsilon)_\delta$ on $\partial B(0,r_0)$, where $(u_\varepsilon)_\delta$ satisfies
\begin{equation}\label{5.3}(u_\varepsilon)_\delta\in H^{3/2}(\partial B_{r_0}) \text{ s.t.}\begin{cases}
||(u_\varepsilon)_\delta-u_\varepsilon||_{{L^2(\partial B_{r_0})}}\leq C \delta||u_\varepsilon||_{{\dot{H}^{1}}(\partial B_{r_0})}\\
||(u_\varepsilon)_\delta||_{{\dot{H}^{1/2}}(\partial B_{r_0})}\leq C||u_\varepsilon||_{{\dot{H}^{1/2}}(\partial B_{r_0})}\\ ||(u_\varepsilon)_\delta||_{{\dot{H}^{3/2}}(\partial B_{r_0})}\leq C \delta^{-1/2}||u_\varepsilon||_{{\dot{H}^{1}}(\partial B_{r_0})}\\
||(u_\varepsilon)_\delta-u_\varepsilon||_{{\dot{H}^{1/2}}(\partial B_{r_0})}\leq C \delta^{1/2}||u_\varepsilon||_{{\dot{H}^{1}}(\partial B_{r_0})}\end{cases}\end{equation}
Then \begin{equation}\label{5.4}\begin{aligned}
\int_{B(0,r)}|u_\varepsilon-w|^2dx&\leq \int_{B(0,r)}|u_\varepsilon-v_\varepsilon-z_\varepsilon|^2dx+\int_{B(0,r)}|v_\varepsilon-w|^2dx+\int_{B(0,r)}|z_\varepsilon|^2dx\\
&=:I_1^2+I_2^2+I_3^2.\end{aligned}\end{equation}
In fact, we have \begin{equation}\label{5.5}
\int_{B(0,r_0)}a_{ij}(x/\varepsilon,x/\varepsilon^2)(\partial_ju_\varepsilon-\partial_jv_\varepsilon)\partial_i \phi dx=0
\end{equation} for any $\phi\in H^1_0(B(0,r_0))$. Taking $\phi=u_\varepsilon-v_\varepsilon-z_\varepsilon$ and according to $(\ref{1.2})$, it gives
\begin{equation}\label{5.6}
||\nabla u_\varepsilon-\nabla v_\varepsilon||_{L^2(B(0,r_0))}\leq C||\nabla z_\varepsilon||_{L^2(B(0,r_0))},
\end{equation} where $C$ depends on $\alpha,\beta$. Then, combining Poincar\'{e}' inequality and $(\ref{5.6})$, it gives that
\begin{equation}\label{5.7}\begin{aligned}
I_1&\leq Cr||\nabla( u_\varepsilon-v_\varepsilon-z_\varepsilon)||_{L^2(B(0,r_0))}\leq Cr ||\nabla z_\varepsilon||_{L^2(B(0,r_0))}\\
&\leq Cr ||(u_\varepsilon)_\delta-u_\varepsilon||_{\dot{H}^{1/2}(\partial B_{r_0})}\leq C r\delta^{1/2}||u_\varepsilon||_{\dot{H}^{1}(\partial B_{r_0})}\\
&\leq Cr^{-1/2}\delta^{1/2}(||u_\varepsilon||_{L^2(B(0,r_0))}+r^2||f||_{L^2(B(0,r_0))}).
\end{aligned}\end{equation}Noting that $\delta\leq r<1/2$, and according to Theorem 3.5, it gives

\begin{equation}\label{5.8}\begin{aligned}
I_2&\leq ||v_\varepsilon-w||_{L^2(B(0,r_0))}\\
&\leq C \left(\frac{\varepsilon}{r}\right)^{\frac{1}{2}}\{r||(u_\varepsilon)_\delta||_{\dot{H}^{\frac{1}{2}}(\partial B_{r_0})}+r^2||(u_\varepsilon)_\delta||_{\dot{H}^{\frac{3}{2}}(\partial B_{r_0})}+r^2||f||_{L^2(B_{r_0})}\}\\
& \leq C \left(\frac{\varepsilon}{r}\right)^{\frac{1}{2}}\{r||u_\varepsilon||_{\dot{H}^{\frac{1}{2}}(\partial B_{r_0})}+r^2\delta^{-1/2}||u_\varepsilon||_{\dot{H}^1(\partial
B_{r_0})}+r^2||f||_{L^2(B_{r_0})}\}\\& \leq C
\left(\frac{\varepsilon}{r}\right)^{\frac{1}{2}}\{[||u_\varepsilon||_{L^2(B_{2r})}+r^2||f||_{L^2(B_{2r})}]\cdot(1+r^{1/2}\delta^{-1/2})+r||u||_{\dot{H}^1(B_{r_0})}\}\\
& \leq C\left(\frac{\varepsilon}{r}\right)^{\frac{1}{2}}(1+r^{1/2}\delta^{-1/2})\{||u_\varepsilon||_{L^2(B_{2r})}+r^2||f||_{L^2(B_{2r})}\}\\
&\leq C\left(\frac{\varepsilon}{\delta}\right)^{\frac{1}{2}}\{||u_\varepsilon||_{L^2(B_{2r})}+r^2||f||_{L^2(B_{2r})}\}
\end{aligned}\end{equation} where we use the trace theorem
$$r||u_\varepsilon||_{\dot{H}^{1/2}(\partial B_{r_0})}\leq C(n)||u_\varepsilon||_{L^2 (B_{r_0})}+C(n)r||u_\varepsilon||_{\dot{H}^1(B_{r_0})}$$
and $(\ref{5.2})$ in the fourth inequality, the Caccioppoli's inequality in the fifth inequality, and $\delta\leq r<1/2$ in the last inequality.
According to the properties of harmonic functions, it gives
\begin{equation}\label{5.9}\begin{aligned}
I_3& \leq Cr^{1/2}||z_\varepsilon||_{L^{\frac{2n}{n-1}}(B(0,r_0))}\leq Cr^{1/2}||(z_\varepsilon)^*||_{L^2(\partial B(0,r_0))}\\
& \leq Cr^{1/2}||z_\varepsilon||_{L^2(\partial B(0,r_0))}\leq Cr^{1/2}\delta||u_\varepsilon||_{\dot{H}^1(\partial B(0,r_0))}\\
& \leq Cr^{-1}\delta\{||u_\varepsilon||_{L^2(B_{2r})}+r^2||f||_{L^2(B_{2r})}\}.
\end{aligned}\end{equation}
where the notation $(z_\varepsilon)^*$ represents the nontangential maximal function of $z_\varepsilon$. Here the second inequality follows from \cite[Remark 9.3]{Kenig2010Homogenization}, and the third equation is the so-called nontangential maximal function estimate (see for example \cite[Theorem 8.5.14]{shen2018periodic}). We use the estimate $(\ref{5.3})$ in the fourth inequality and in the last step.\\
Thus, combining $(\ref{5.7})$, $(\ref{5.8})$ and $(\ref{5.9})$ gives that $$\begin{aligned}
||u_\varepsilon-w||_{L^2(B(0,r))}&\leq C\{(\varepsilon/\delta)^{1/2}+(\delta/r)^{1/2}+\delta/r\}\{||u_\varepsilon||_{L^2(B_{2r})}+r^2||f||_{L^2(B_{2r})}\}\\
& \leq C\{(\varepsilon/\delta)^{1/2}+(\delta/r)^{1/2}\}\{||u_\varepsilon||_{L^2(B_{2r})}+r^2||f||_{L^2(B_{2r})}\}\\
& \leq C(\varepsilon/r)^{1/4}\{||u_\varepsilon||_{L^2(B_{2r})}+r^2||f||_{L^2(B_{2r})}\}
\end{aligned}$$ where we set $\delta=(\varepsilon r)^{1/2}$, then $\delta \leq r \Leftrightarrow\varepsilon \leq r$. Therefore, we complete the proof.
\end{proof}

After we obtain the approximating lemma, the following story is totally similarly to \cite{shen2017boundary}. Before we proceed further, for any vector $M\in\mathbb{R}^n$, we denote $G(r,v)$ as the following
\begin{equation}\label{5.10}
G(r,v)\triangleq \frac{1}{r}\inf_{c\in \mathbb{R},M\in\mathbb{R}^n}\left\{ \left(\fint_{B(0,r)}|v-Mx-c|^2dx\right)^{1/2}+r^2\left(\fint_{B(0,r)}|f|^pdx\right)^{1/p}\right\}.
\end{equation}

\begin{lemma}Given $f\in L^p(\Omega)$ for some $p>n$, let $u_0\in H^1(B(0,2r))$ be a solution of $\mathcal{L}_0u_0=f$ in $B(0,2r)$. Then there exists a constant $\theta\in(0,1/4)$, depending only on ${\alpha'},$ $\beta$, $M$, $p$ and $n$ such that
\begin{equation}\label{5.11}G(\theta r,u_0)\leq \frac{1}{2}G(r,u_0) \text{\ \ \ \ holds for any }r\in(0,1).\end{equation}
\end{lemma}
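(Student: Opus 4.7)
Since $\hat A$ is a constant matrix, solutions of $\mathcal{L}_0 u = f$ enjoy the full classical regularity theory of constant-coefficient elliptic operators. The plan is the standard excess-decay argument based on $C^{1,\alpha}$ estimates: approximate $u_0$ by an affine function coming from a first-order Taylor expansion, whose error is controlled by the $C^{1,\alpha}$ seminorm, which in turn is controlled by the quantity already sitting inside $G(r,u_0)$.

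First I would fix $\alpha = 1 - n/p \in (0,1)$ and pick near-minimizers $c' \in \mathbb{R}$, $M' \in \mathbb{R}^n$ for the infimum in $G(r,u_0)$, so that
\begin{equation*}
r\,G(r,u_0) \;\geq\; \tfrac{1}{2}\Bigl\{\bigl(\fint_{B(0,r)} |u_0 - c' - M'x|^2\bigr)^{1/2} + r^2 \bigl(\fint_{B(0,r)}|f|^p\bigr)^{1/p}\Bigr\}.
\end{equation*}
Put $\tilde u(x) := u_0(x) - c' - M'\cdot x$. Since $\mathcal{L}_0(c' + M'\cdot x) = 0$, we still have $\mathcal{L}_0 \tilde u = f$ in $B(0,2r)$. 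I would then invoke the standard interior $C^{1,\alpha}$ estimate for constant-coefficient elliptic equations with $L^p$ right-hand side ($p > n$):
\begin{equation*}
[\tilde u]_{C^{1,\alpha}(B(0,r/2))} \;\leq\; C\,\Bigl\{ r^{-1-\alpha}\bigl(\fint_{B(0,r)}|\tilde u|^2\bigr)^{1/2} + r^{1-\alpha}\bigl(\fint_{B(0,r)}|f|^p\bigr)^{1/p}\Bigr\}.
\end{equation*}

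Next I would set $c := \tilde u(0) + c'$ and $M := \nabla \tilde u(0) + M'$, so that $u_0(x) - c - M\cdot x = \tilde u(x) - \tilde u(0) - \nabla \tilde u(0)\cdot x$. The Taylor remainder estimate on the ball $B(0,\theta r)$ with $\theta \leq 1/4$ gives
\begin{equation*}
\sup_{B(0,\theta r)} |u_0 - c - M\cdot x| \;\leq\; (\theta r)^{1+\alpha}[\tilde u]_{C^{1,\alpha}(B(0,r/2))}.
\end{equation*}
Combining the two displayed inequalities and using $r^{-1-\alpha}\cdot(\theta r)^{1+\alpha} = \theta^{1+\alpha}$ and $r^{1-\alpha}\cdot(\theta r)^{1+\alpha} = \theta^{1+\alpha} r^2$, I obtain
\begin{equation*}
\bigl(\fint_{B(0,\theta r)}|u_0 - c - M\cdot x|^2\bigr)^{1/2} \;\leq\; C\theta^{1+\alpha}\,r\,G(r,u_0).
\end{equation*}
Finally, plugging $(c,M)$ as a competitor in the definition of $G(\theta r,u_0)$ and controlling the $L^p$ average of $f$ on the smaller ball via $(\fint_{B(0,\theta r)}|f|^p)^{1/p} \leq \theta^{-n/p}(\fint_{B(0,r)}|f|^p)^{1/p}$, I get
\begin{equation*}
G(\theta r, u_0) \;\leq\; \tfrac{1}{\theta r}\bigl\{C\theta^{1+\alpha} r\,G(r,u_0) + \theta^{2-n/p} r^2\bigl(\fint_{B(0,r)}|f|^p\bigr)^{1/p}\bigr\} \;\leq\; C'\theta^{\alpha}\,G(r,u_0),
\end{equation*}
since both exponents $1+\alpha - 1 = \alpha$ and $2 - n/p - 1 = \alpha$ coincide. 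Choosing $\theta \in (0,1/4)$ so that $C'\theta^\alpha \leq 1/2$ closes the argument.

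The only non-routine ingredient is the interior $C^{1,\alpha}$ estimate with $L^p$ data, but this is classical for constant-coefficient equations: one decomposes $\tilde u = h + w$ with $\mathcal{L}_0 h = 0$ (harmonic-type, hence smooth with the usual derivative bounds) and $w$ the Newtonian-type potential of $f$, whose $C^{1,\alpha}$ seminorm is controlled by $\|f\|_{L^p}$ thanks to $p>n$. Everything else is bookkeeping with scaling. I do not expect a serious obstacle; the only mildly delicate point is verifying that the two exponents of $\theta$ match, which is exactly guaranteed by the relation $\alpha = 1 - n/p$.
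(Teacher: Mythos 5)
Your proposal is correct and follows essentially the same route as the paper: interior $C^{1,\alpha}$ regularity for the constant-coefficient operator $\mathcal{L}_0$ with $L^p$ data ($p>n$), a first-order Taylor expansion at the origin to build the affine competitor, and the affine invariance of the equation to pass from $u_0$ to $G(r,u_0)$, then choosing $\theta$ small. The only cosmetic differences are that you subtract the near-minimizing affine function at the outset (the paper instead notes at the end that $u_0-Mx-c$ solves the same equation) and that you work with the exponent $\alpha=1-n/p$ throughout, whereas the paper keeps a H\"older exponent $\alpha'$ from the regularity theory and uses $\sigma=\min\{1-n/p,\alpha'\}$.
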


\begin{proof}It is fine to assume $u_0\in H^2((B(0,r)))$. According to the De Giorgi-Nash-Moser theorem, there exists ${\alpha'}\in(0,1)$ and $C>1$, depending only on $\alpha,\beta,M$ and $n$, such that
\begin{equation}\label{5.12}
||\nabla u_{0}||_{C^{0,{\alpha'}}(B(0,r/2))}\leq C r^{-{\alpha'}}\left\{\frac{1}{r}\left(\fint_{B(0, r)}\left|u_{0}\right|^{2}\right)^{1 / 2}+r\left(\fint_{B(0, r)}|f|^{p}\right)^{1/p}\right\}.\end{equation}
then, according to the definition of $G(\theta r,u_0)$, we have
\begin{equation}\label{5.13}
\begin{aligned} G(\theta r, u_{0}) & \leq \frac{1}{\theta r}\left\{\left(\fint_{B(0, \theta r)}\left|u_{0}-M_0x-c_{0}\right|^{2}\right)^{\frac{1}{2}}+\theta^{2} r^{2}\left(\fint_{B(0, \theta r)}|f|^{p}\right)^{\frac{1}{p}}\right\}\\
&\leq C\theta^{\sigma}\left\{ r^{\alpha'}||\nabla u_{0}||_{C^{0,{\alpha'}}(B(0,r/4))}+r\left(\fint_{B(0,r)}|f|^p\right)^{1/p}\right\}\\
& \leq C \theta^{\sigma}\left\{\frac{1}{r}\left(\fint_{B(0, r)}\left|u_{0}\right|^{2}\right)^{1 / 2}+r\left(\fint_{B(0, r)}|f|^{p}\right)^{1/p}\right\},
\end{aligned}\end{equation}
where $\sigma=\min\{1-n/p,{\alpha'}\}$, and we choose $c_0=u_0(0)$ and $M_0=\nabla u_0(0)$ . Observing that $u_0-Mx-c$ satisfies the same equation as $u_0$ in $B(0,2r)$, therefore, we have
\begin{equation*}G(\theta r,u_0)\leq C\theta^\sigma G(r,u_0), \end{equation*}and we are done by choosing $\theta$ such that $C\theta^{\sigma}=1/2$.
\end{proof}

For simplicity, we denote $\Phi(r)$ by
\begin{equation}\label{5.14}
\Phi(r)=\frac{1}{r} \inf _{c \in \mathbb{R}}\left\{\left(\fint_{B_{(0,r)}}\left|u_{\varepsilon}-c\right|^{2}\right)^{1 / 2}+r^{2}\left(\fint_{B_{(0,r)}}|f|^{p}\right)^{1 / p}\right\}
\end{equation}
\begin{lemma}Assume the same conditions as in Theorem 1.3. Let $u_\varepsilon$ be a weak solution of $\mathcal{L}_\varepsilon(u_\varepsilon)=f$ in $B(0,2r)$. Then,  \begin{equation}\label{5.15}
G\left(\theta r, u_{\varepsilon}\right) \leq \frac{1}{2} G\left(r, u_{\varepsilon}\right)+C\left(\frac{\varepsilon}{r}\right)^{1 / 4}\Phi(2r)\end{equation} for any $\varepsilon\leq r<1/4$, where $\theta\in (0,1/4)$ is given in Lemma 5.2 and $C$ depends only on $\alpha,$ $\beta$, $M$, $p$ and $n$.
\end{lemma}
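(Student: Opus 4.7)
The plan is a standard one-step approximation: compare $u_\varepsilon$ with a homogenized solution $w$ using Lemma~5.1, apply the Campanato-type iteration of Lemma~5.2 to $w$, and then transfer back via the sub-additivity of $G(\rho,\cdot)$ in its second argument. The elementary sub-additivity I will use repeatedly is that, since the $f$-term in~\eqref{5.10} does not depend on the second argument and the $L^2$ triangle inequality survives the infimum over $c,M$, one has
\begin{equation*}
G(\rho,v_1)\le G(\rho,v_2)+\frac{1}{\rho}\Bigl(\fint_{B(0,\rho)}|v_1-v_2|^2\Bigr)^{1/2}
\end{equation*}
for all $v_1,v_2\in L^2(B(0,\rho))$.

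First I would apply Lemma~5.1 to $u_\varepsilon$ on $B(0,2r)$ to produce $w\in H^1(B(0,r))$ with $\mathcal{L}_0 w=f$ in $B(0,r)$. A key observation, visible from the auxiliary-problem construction in the proof of Lemma~5.1, is that the construction is translation-invariant: replacing $u_\varepsilon$ by $u_\varepsilon-c$ for any constant $c$ replaces $w$ by $w-c$, so $u_\varepsilon-w$ itself is independent of $c$. Consequently, applying Lemma~5.1 to $u_\varepsilon-c^\ast$, where $c^\ast$ is the minimizer in the infimum defining $\Phi(2r)$, and using H\"older's inequality $(\fint|f|^2)^{1/2}\le(\fint|f|^p)^{1/p}$ (legal because $p>n\ge 2$), the approximation inequality becomes
\begin{equation*}
\frac{1}{r}\Bigl(\fint_{B(0,r)}|u_\varepsilon-w|^2\Bigr)^{1/2}\le C\Bigl(\frac{\varepsilon}{r}\Bigr)^{1/4}\Phi(2r).
\end{equation*}

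Next I would apply Lemma~5.2 to $w$ on $B(0,r)$; its proof only uses $w$ on $B(0,r)$ via the interior De~Giorgi--Nash--Moser estimate on $B(0,r/2)$, so $G(\theta r,w)\le \tfrac12 G(r,w)$ follows. Combining this with two uses of sub-additivity and the trivial comparison $(\fint_{B(0,\theta r)}|u_\varepsilon-w|^2)^{1/2}\le \theta^{-n/2}(\fint_{B(0,r)}|u_\varepsilon-w|^2)^{1/2}$, I would compute
\begin{align*}
G(\theta r,u_\varepsilon)
&\le G(\theta r,w)+\theta^{-n/2-1}\cdot\frac{1}{r}\Bigl(\fint_{B(0,r)}|u_\varepsilon-w|^2\Bigr)^{1/2}\\
&\le \tfrac12 G(r,w)+C\Bigl(\frac{\varepsilon}{r}\Bigr)^{1/4}\Phi(2r)\\
&\le \tfrac12 G(r,u_\varepsilon)+C\Bigl(\frac{\varepsilon}{r}\Bigr)^{1/4}\Phi(2r),
\end{align*}
where in the last line I have absorbed the additional factor $\tfrac12\cdot\tfrac{1}{r}(\fint_{B(0,r)}|u_\varepsilon-w|^2)^{1/2}\le C(\varepsilon/r)^{1/4}\Phi(2r)$ coming from the sub-additivity bound $G(r,w)\le G(r,u_\varepsilon)+\tfrac{1}{r}(\fint|u_\varepsilon-w|^2)^{1/2}$. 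Since $\theta$ is a fixed constant from Lemma~5.2, the overall constant depends only on $\alpha$, $\beta$, $M$, $p$, and $n$, giving precisely~\eqref{5.15}.

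The only non-mechanical point is the translation trick in the first step: without applying Lemma~5.1 to $u_\varepsilon-c^\ast$ rather than $u_\varepsilon$ itself, the right-hand side of the approximation estimate would feature the crude quantity $(\fint_{B(0,2r)}|u_\varepsilon|^2)^{1/2}$, which is not controlled by $\Phi(2r)$ (e.g.\ it blows up after adding a large constant to $u_\varepsilon$). This refinement is essential so that the error term is compatible with the geometric iteration on $\Phi$ that will eventually yield Theorem~1.3.
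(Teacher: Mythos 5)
Your proposal is correct and follows essentially the same chain of inequalities as the paper's own proof of Lemma~5.3: triangle inequality for $G$ at scale $\theta r$, contraction for $w$ via Lemma~5.2, triangle inequality back at scale $r$, then the approximation estimate of Lemma~5.1. The translation trick you flag as essential is also the paper's final step, which simply remarks that $u_\varepsilon-c$ satisfies the same equation as $u_\varepsilon$ in $B(0,2r)$ and so the right-hand side of Lemma~5.1 can be replaced by $\Phi(2r)$.
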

\begin{proof}
Fix $r\in[\varepsilon,1/4)$, let $w$ be a solution of $\mathcal{L}_0w=f$ in $B(0,r)$ as in Lemma 5.1. Then it gives
\begin{equation}\label{5.16}
\begin{aligned} G(\theta r, u_{\varepsilon}) & \leq \frac{1}{\theta r}\left(\fint_{B(0, \theta
r)}\left|u_{\varepsilon}-w\right|^{2}\right)^{\frac{1}{2}}+G(\theta r, w) \\
& \leq \frac{C}{r}\left(\fint_{B(0,r)}\left|u_{\varepsilon}-w\right|^{2}\right)^{\frac{1}{2}}+\frac{1}{2} G(r, w) \\
& \leq \frac{1}{2} G\left(r,u_{\varepsilon}\right)+\frac{C}{r}\left(\fint_{B(0, r)}\left|u_{\varepsilon}-w\right|^{2}\right)^{\frac{1}{2}} \\
& \leq \frac{1}{2} G\left(r,u_{\varepsilon}\right)+C\left(\frac{\varepsilon}{r}\right)^{1 / 4}\left\{\left(\frac{1}{r}\fint_{B(0,2 r)}\left|u_{\varepsilon}\right|^{2}\right)^{1 /2}+r\left(\fint_{B(0,2 r)}|f|^{p}\right)^{1 / p}\right\}\\
&\leq \frac{1}{2} G\left(r, u_{\varepsilon}\right)+C\left(\frac{\varepsilon}{r}\right)^{1 / 4}\Phi(2r), \end{aligned}
\end{equation} where we use the estimate $(\ref{5.11})$ in the second inequality, and $(\ref{5.1})$ in the fourth inequality, and note that $u_\varepsilon-c$ satisfy the same equation as $u_\varepsilon$ in $B(0,2r)$. Consequently, we complete the proof.
\end{proof}

At this position, we introduce the following iteration lemma which plays an important role in obtaining the Lipschitz estimates.
\begin{lemma}(Iteration lemma). Let $\psi(r)$ and $\Psi(r)$ be two nonnegative continuous functions on the intergral $(0,1]$. Let $0<\varepsilon<1/4$,
and suppose that there exists a constant $C_0$ such that
 \begin{equation}\label{5.17}
\begin{cases}{\max _{r \leq t \leq 2 r} \Psi(t) \leq C_{0} \Psi(2 r)} \\
\max _{r \leq s, t \leq 2 r}|\psi(t)-\psi(s)| \leq C_{0} \Psi(2 r),
\end{cases}
\end{equation} for any $r\in[\varepsilon,1/2)$.
We further assume that
\begin{equation}\label{5.18}
\Psi(\theta r) \leq \frac{1}{2} \Psi(r)+C_{0} w(\varepsilon / r)\{\Psi(2 r)+\psi(2 r)\}
\end{equation}
holds for any $\varepsilon\leq r<1/4$, where $\theta\in (0,1/4)$ is a constant and $w$ is a nonnegative increasing function on $[0,1]$ such that
$w(0)=0$ and
\begin{equation*}
\int_0^1\frac{w(t)}{t}dt<\infty.
\end{equation*}
Then, we have
\begin{equation}\label{5.19}
\max_{\varepsilon\leq r\leq 1}\left\{\Psi(r)+\psi(r)\right\}\leq C\left\{\Psi(1)+\psi(1)\right\},
\end{equation}
where $C$ depends only on $C_0$, $\theta$ and $w$.
\begin{proof}
The proof could be found in \cite[Lemma 6.4.6]{shen2018periodic}.
\end{proof}

\end{lemma}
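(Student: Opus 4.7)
The plan is to iterate the main inequality along a geometric sequence of radii and close the estimate using the Dini integrability of $w$.

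First I would fix the geometric scale $r_k=\theta^k$ for $k=0,1,\dots,K$, where $K$ is the largest index with $r_K\geq\varepsilon$, and write $\Psi_k=\Psi(r_k)$. Applying the main hypothesis (5.18) at $r=r_k$ gives
$$\Psi_{k+1}\leq\tfrac{1}{2}\Psi_k+C_0\,w(\varepsilon/r_k)\bigl\{\Psi(2r_k)+\psi(2r_k)\bigr\}.$$
Using the first structural hypothesis in (5.17), together with $\theta\leq 1/4$ so that $2r_k\leq r_{k-1}/2\in[r_{k-1}/2,r_{k-1}]$, one controls $\Psi(2r_k)\leq C\Psi_{k-1}$ by a uniform constant depending only on $C_0$ and $\theta$.

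The second ingredient is to propagate the $\psi$-term using the second structural hypothesis. Since $r_k,\,r_{k-1}\in[r_{k-1}/2,r_{k-1}]$ (again by $\theta\leq 1/2$), it yields $|\psi(2r_k)-\psi(r_{k-1})|\leq C_0\Psi_{k-1}$, and telescoping gives
$$\psi(2r_k)\leq \psi(1)+C_0\sum_{j=0}^{k-1}\Psi_j.$$
Inserting this into the iterated inequality produces a closed discrete recursion of the form $\Psi_{k+1}\leq \tfrac{1}{2}\Psi_k+C\,w(\varepsilon/r_k)\bigl\{\Psi(1)+\psi(1)+\sum_{j\leq k}\Psi_j\bigr\}$.

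The final step is to solve this recursion. Iterating and summing the geometric factor $2^{-k}$ reduces matters to bounding the weighted sum $\sum_{k=0}^{K}w(\varepsilon/r_k)$. Because $\varepsilon/r_k=\varepsilon\theta^{-k}$ forms a geometric progression with ratio $1/\theta$, a change of variables on the logarithmic scale produces the comparison
$$\sum_{k=0}^{K}w(\varepsilon/r_k)\leq\frac{C}{\log(1/\theta)}\int_0^1\frac{w(t)}{t}\,dt<\infty,$$
which is exactly where the Dini hypothesis is used. A discrete Gr\"onwall argument then bounds $\sup_k\Psi_k\leq C\{\Psi(1)+\psi(1)\}$, and the bound on $\psi(r_k)$ follows from the telescoping identity above. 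Passing from the dyadic radii $r_k$ to arbitrary $r\in[\varepsilon,1]$ uses the comparability hypothesis (5.17) one last time.

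The main obstacle is the coupling between $\Psi$ and $\psi$: the recursion for $\Psi$ involves $\psi$ at larger scales, and controlling $\psi$ requires integrating $\Psi$ back down. The Dini integrability $\int_0^1 w(t)/t\,dt<\infty$ is what allows this Gr\"onwall loop to close with an absolute constant rather than one that degenerates as $\varepsilon\to 0$; without it, the coupled sum would generically diverge logarithmically.
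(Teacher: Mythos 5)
The paper offers no argument for this lemma at all---it simply cites \cite[Lemma 6.4.6]{shen2018periodic}---so your proposal is in effect a reconstruction of the standard proof of the cited result, and its architecture is sound: iterate (5.18) along $r_k=\theta^k$, convert $\psi(2r_k)$ into $\psi(1)$ plus a sum of $\Psi$'s by telescoping with the second line of (5.17), bound $\sum_k w(\varepsilon/r_k)$ by $C\int_0^1 w(t)t^{-1}dt$ using that the arguments $\varepsilon\theta^{-k}$ are geometrically spaced and $w$ is increasing, and close the coupled recursion with a discrete Gr\"onwall inequality. Two points in your sketch need repair, though both are routine. First, your scale comparisons misapply (5.17): since $\theta<1/4$ one has $2r_k=2\theta r_{k-1}<r_{k-1}/2$, so neither $r_k$ nor $2r_k$ lies in $[r_{k-1}/2,r_{k-1}]$, and the bounds $\Psi(2r_k)\leq C\Psi_{k-1}$ and $|\psi(2r_k)-\psi(r_{k-1})|\leq C\Psi_{k-1}$ do not follow from a single application of (5.17); they do follow after chaining it over roughly $\log_2\bigl(1/(2\theta)\bigr)$ dyadic steps (note that taking $t=r$ in the first condition gives $\Psi(r)\leq C_0\Psi(2r)$), at the price of a constant depending on $C_0$ and $\theta$, and one must check the intermediate radii stay in the admissible range $[\varepsilon,1/2)$ (they do, since $2r_k\geq 2\varepsilon$ and $r_{k-1}\leq\theta$ for $k\geq2$; the top scales, including $r_0=1$ where (5.18) is not even applicable because it requires $r<1/4$, and non-dyadic radii, are handled by the same chaining). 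Second, be explicit that the Gr\"onwall step must be run on the partial sums $S_k=\sum_{j\leq k}\Psi_j$ rather than on $\sup_k\Psi_k$: iterating the factor $1/2$ and summing gives $S_m\leq C\Psi(\theta)+C\{\Psi(1)+\psi(1)\}\sum_k w(\varepsilon/r_k)+C\sum_{k<m}w(\varepsilon/r_k)S_k$, and the product form of the discrete Gr\"onwall inequality yields $S_m\leq C\exp\bigl(C\sum_k w(\varepsilon/r_k)\bigr)\{\Psi(1)+\psi(1)\}$. This matters for two reasons: the Dini integral is finite but not small, so a naive absorption of the term $C\bigl(\sum_k w(\varepsilon/r_k)\bigr)S_m$ would not close, whereas the exponential Gr\"onwall factor is harmless and depends only on $C_0$, $\theta$ and $w$; and it is the bound on the sums $S_k$, not merely on $\sup_k\Psi_k$, that your telescoping estimate for $\psi(r_k)$ actually requires. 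With these adjustments your argument is a complete and self-contained proof of the lemma the paper delegates to the reference.
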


\textbf{Proof of Theorem 1.3}. It is fine to assume $0<\varepsilon<1/4$, otherwise it follows from the classical elliptic theory. In view of $(\ref{5.10})$,  set $\Psi(r)=G(r,u_\varepsilon)$, $w(t)=t^{1/4}$. It is not hard to see that
\begin{equation}\label{5.20}
\Psi(t)\leq C\Psi(2r) \text{\quad if }t\in[r,2r].
\end{equation}
Next, define $\psi(r)=|M_r|$, where $M_r$
is the vector such that
\begin{equation}\label{5.21}
\Psi(r)=\frac{1}{r} \inf _{c \in \mathbb{R}}\left\{\left(\fint_{B(0, r)}\left|u_{\varepsilon}-M_{r} x-c\right|^{2}\right)^{\frac{1}{2}}+r^{2}\left(\fint_{B(0, r)}|f|^{p}\right)^{\frac{1}{p}}\right\}.
\end{equation}
Then we have \begin{equation}\label{5.22}
\Phi(r) \leq C\{\Psi(2 r)+\psi(2 r)\},
\end{equation} with $\Phi(r)$ defined in $(\ref{5.14})$.
This coupled with Lemma 5.3 leads to
\begin{equation}\label{5.23}
\Psi(\theta r) \leq \frac{1}{2} \Psi(r)+C_{0} (\varepsilon / r)^{1/4}\{\Psi(2 r)+\psi(2 r)\},
\end{equation}
for $\varepsilon\leq r<1/4$, which satisfies the condition $(\ref{5.18})$. To verify the condition $(\ref{5.17})$, let $t,s\in[r,2r]$, then
\begin{equation}\label{5.24}
\begin{aligned}\left|M_{t}-M_{s}\right| & \leq \frac{C}{r}\left(\fint_{B(0, r)}\left|\left(M_{t}-M_{s}\right) x-c\right|^{2}\right)^{\frac{1}{2}} \\ & \leq \frac{C}{t}\left(\fint_{B(0, r)}\left|u_{\varepsilon}-M_{t} x-c\right|^{2}\right)^{\frac{1}{2}}+\frac{C}{s}\left(\fint_{B(0, r)}\left|u_{\varepsilon}-M_{s} x-c\right|^{2}\right)^{\frac{1}{2}} \\ & \leq C\{\Psi(t)+\Psi(s)\} \leq C \Psi(2 r). \end{aligned}
\end{equation} Consequently, according to Lemma 5.4, for any $\varepsilon\leq r<1/4$, we have the following estimate
\begin{equation}\label{5.25}
\begin{aligned} \frac{1}{r} \inf _{c \in \mathbb{R}}\left(\fint_{B(0, r)}\left|u_{\varepsilon}-c\right|^{2}\right)^{\frac{1}{2}}+r \left(\fint_{B(0,
r)}\left|f\right|^{p}\right)^{\frac{1}{p}}& \leq\{\Psi(r)+\psi(r)\} \leq C\{\Psi(1)+\psi(1)\} \\ & \leq C\left\{\|\nabla
u_\varepsilon\|_{L^{2}(B(0,1))}+\|f\|_{L^{p}(B(0,1))}\right\}. \end{aligned}
\end{equation}
Therefore, the desired estimate $(\ref{1.10})$ follows from the Caccioppoli' inequality.

\begin{rmk}If we want to obtain the following estimate
\begin{equation*}
\left(\fint_{B\left(x_{0}, \varepsilon^2\right)}\left|\nabla u_{\varepsilon}\right|^{2}\right)^{1 / 2} \leq C\left\{\left(\fint_{B\left(x_{0}, 1\right)}\left|\nabla u_{\varepsilon}\right|^{2}\right)^{1 / 2}+\left(\fint_{B\left(x_{0}, 1\right)}|f|^{p}\right)^{1 / p}\right\},
\end{equation*} then the scale of the approximating lemma (Lemma 5.1) should decrease to $\varepsilon^2$, consequently, it seems that we should choose $\lambda=2$ in $(\ref{3.27})$. Unfortunately, careful computation shows that $\lambda=2$ can't provide us any results of convergence rates.
\end{rmk}
\normalem
\end{document}